\documentclass[runningheads]{llncs}
\usepackage[T1]{fontenc}
\usepackage{graphicx}
\usepackage{hyperref}
\usepackage{amsmath}
\usepackage{amssymb}
\usepackage{amsfonts}
\usepackage{multicol}
\usepackage{subfigure}
\usepackage[numbers,sort&compress]{natbib}
\usepackage{booktabs}
\usepackage{longtable,tabu}

\usepackage{enumitem}
\setlist[enumerate]{label={\arabic*.}}

\usepackage{color}

\newcommand{\critset}[2]{\operatorname{crit}_{#1}(#2)}
\newcommand{\numcrit}[2]{\left\vert\operatorname{crit}_{#1}(#2)\right\vert}

\newcommand{\broom}[2]{B_{#1}(#2)}

\usepackage{tikz}
\usetikzlibrary{calc}

\usepackage{tkz-graph}

\newcommand{\squishlist}{
 \begin{list}{$\bullet$}
  { \setlength{\itemsep}{0pt}
     \setlength{\parsep}{3pt}
     \setlength{\topsep}{3pt}
     \setlength{\partopsep}{0pt}
     \setlength{\leftmargin}{2.5em}
     \setlength{\labelwidth}{1em}
     \setlength{\labelsep}{0.5em} } }

\newcommand{\squishlisttwo}{
 \begin{list}{$\triangleright$}
  { \setlength{\itemsep}{0pt}
     \setlength{\parsep}{0pt}
    \setlength{\topsep}{0pt}
    \setlength{\partopsep}{0pt}
    \setlength{\leftmargin}{2em}
    \setlength{\labelwidth}{1.5em}
    \setlength{\labelsep}{0.5em} } }

\newcommand{\squishend}{
  \end{list}  }
  
\newcommand{\kcol}{$k$-\textsc{Colouring}}

\begin{document}
\title{Vertex-critical graphs in subfamilies of $(P_4+\ell P_1)$-free graphs}
\titlerunning{Vertex-critical graphs in subfamilies of $(P_4+\ell P_1)$-free graphs}
\author{Iain Beaton\inst{1} \and
Ben Cameron\inst{2} }
%
%
\institute{Acadia University, Wolfville, NS Canada \and
University of Prince Edward Island, Charlottetown, PE Canada}

%
%

\maketitle              
\begin{abstract}
A graph $G$ is $k$-vertex-critical if $\chi(G)=k$ but $\chi(G-v)<k$ for all $v\in V(G)$. In this paper we make progress on the open problem of the finiteness of $k$-vertex-critical $(P_4+\ell P_1)$-free graphs by showing that there are only finitely many $k$-vertex-critical graphs in the following subfamilies of $(P_4+\ell P_1)$-free graphs for all $k\ge 1$ and $\ell\ge 0$:
\vspace{2mm}
\begin{itemize}
    \item $(P_4+\ell P_1,2P_2)$-free graphs,
    \item $(P_4+\ell P_1,\text{chair})$-free graphs,
    \item $(P_4+\ell P_1,P_5,\text{bull})$-free graphs, and
    \item $(P_4+\ell P_1,P_5,\text{cricket})$-free graphs.
\end{itemize}
\vspace{2mm}
\noindent In fact, all but the first of these are special cases of our general result that there are only finitely many $k$-vertex-critical $(P_4+\ell P_1,\broom{4}{m},\broom{3}{m}^{+})$-free graphs for all $k\ge 1$ and $\ell,m\ge 0$. Here $\broom{n}{m}$ is the graph obtained from a path of order $n$ by identifying one of its leaves with the centre vertex of $K_{1,m}$   and $\broom{n}{m}^{+}$ is the graph obtained by identifying an edge of $K_3$ with the edge of $\broom{n}{m}$ with endpoints of degrees $2$ and $m$, respectively. Our results imply the existence of simple polynomial-time certifying algorithms to decide the $k$-colourability of all graphs in these subfamilies for every fixed $k$. 

We also show that $\chi(G)\le \ell+2$ for all $(P_4+\ell P_1,K_3)$-free graphs and all $\ell\ge 0$, improving the previously known upper bound of $2\ell+2$ that followed from Randerath and Schiermeyer's 2004 result on $(P_t,K_3)$-free graphs. More generally, we provide a $\chi$-bound in $O(\ell^{\omega-1})$ for $(P_4+\ell P_1)$-free graphs which improves the bound of $(2\ell+2)^{\omega-1}$ which followed from Gravier, Hoàng and Maffray in 2003 for $P_{t}$-free graphs.

\keywords{Graph coloring  \and  $k$-critical graphs \and Polynomial-time algorithms.}
\end{abstract}
\section{Introduction}
The problem of determining the $k$-colourability of a graph for fixed $k$, which we will denote as \kcol{}, has seen a very active area of research since it was shown to be NP-complete for all $k\ge 3$ as one of Karp's original 21 problems~\cite{Karp1972}. 
While there are many directions one can take in this field, we are following a long line of research on developing polynomial-time algorithms to solve \kcol{} when the input graphs are restricted to be from a certain family of graphs. 
The restricted families of graphs we are interested in are those defined by forbidden induced subgraphs. 
A graph $G$ is $H$-free if no induced subgraph of $G$ is isomorphic to $H$. For graphs $H_1$, $H_2$, $\dots,\ H_m$, a  graph is $(H_1,H_2,\dots H_m)$-free if it is $H_i$-free for each $i\in \{1,\dots,m\}$. 
\kcol{} $H$-free remains NP-complete when $H$ contains an induced claw~\cite{Holyer1981,LevenGail1983} and when $H$ contains an induced cycle~\cite{KaminskiLozin2007}. 
Thus, assuming P$\neq$NP, if a polynomial-time \kcol{} algorithm exists for $H$-free graphs, then $H$ must be a linear forest, that is, a forest where each component is a path. 
When forbidding connected graphs, it is known that \kcol{} $P_t$-free graphs is NP-complete for all $k\ge 5$ and $t\ge 6$ and for $k=4$ and $t\ge 7$~\cite{Huang2016}. 
On the other hand, $4$-\textsc{Colouring} $P_6$-free graphs is polynomial-time solvable~\cite{P6free1,P6free2}, and \kcol{} $P_5$-free graphs is polynomial-time solvable \textit{for all} $k$~\cite{Hoang2010}. 
When forbidding a disconnected graph, it is known that \kcol{} $H$-free graphs remains NP-complete for all $k\ge 5$ when $H$ is $P_5+P_2$\cite{ChudnovskyHajebiSpirkl2024} or $2P_4$~\cite{HajebiLiSpirkl2022}, but is polynomial-time solvable for all $k,\ell\ge 0$ when $H$ is an induced subgraph of $P_5+\ell P_1$~\cite{Couturier2015} or of $\ell P_3$~\cite{ChudnovskyHajebiSpirkl2024}. 

In $H$-free graphs with known polynomial-time \kcol{} algorithms, there is very active research on determining which of these have the stronger property that they contain only finitely many $(k+1)$-vertex-critical graphs. 
This implies that \kcol{} is polynomial-time solvable as one can search the input graph for any of the finitely many $(k+1)$-vertex-critical graphs as an induced subgraph to determine its $k$-colourability, where, if found, a $(k+1)$-vertex-critical induced subgraph can be returned as a certificate for a negative answer (see~\cite{P5banner2019}, for example). 
This is especially helpful when paired with the polynomial-time \kcol{} algorithms for $(P_5+\ell P_1)$-free graphs that also return a $k$-colouring of the graph if one exists. Thus, in any subfamily of $(P_5+\ell P_1)$-free graphs where there are only finitely many $(k+1)$-vertex-critical graphs, there exists a polynomial-time \textit{certifying} \kcol{} algorithm for the subfamily. 
See~\cite{McConnell2011} for a survey on certifying algorithms and strong arguments for their importance. 
Thus, there has been lots of recent attention paid to proving various families of graphs contain only finitely many $k$-vertex-critical graphs for all $k$. For a positive integer $k$ and graphs $H_1$, $H_2$, $\dots,\ H_m$, let $\critset{k}{H_1, H_2, \dots,\ H_m}$ denote the set of all $k$-vertex-critical $(H_1, H_2, \dots,\ H_m)$-free graphs, so $\numcrit{k}{H_1, H_2, \dots,\ H_m}$ will denote the number of $k$-vertex-critical $(H_1, H_2, \dots,\ H_m)$-free graphs.

In the least restrictive setting with only a single forbidden induced subgraph, the most comprehensive result is that $\numcrit{4}{H}<\infty$ if and only if $H$ is an induced subgraph of $P_6$, $2P_3$, or $P_4+\ell P_1$ for some $\ell\ge 0$~\cite{Chud4critical2020} .
Further, it was shown in~\cite{AbuadasCameronHoangSawada2022} that $\numcrit{k}{P_3+\ell P_1}<\infty$ for all $k\ge 1$ and $\ell\ge 0$.
It follows from these results (and some we will cite later) that the only graphs $H$ for which the finiteness of $\numcrit{k}{H}$ is unknown for all $k$ is when $H=P_4+\ell P_1$ for all $\ell\ge 1$.
Thus, determining exactly which graphs $H$ admit $\numcrit{k}{H}<\infty$ for all $k$ is equivalent to answering the open question from~\cite{CameronHoangSawada2022}:
 For which values of $k\ge 5$ and $\ell \ge 1$ is $\numcrit{k}{P_4+\ell P_1}<\infty$?
While there are no known infinite families of $k$-vertex-critical $(P_4+\ell P_1)$-free graphs for any $k$ or $\ell$, progress on resolving this open problem is limited to subfamilies of the smallest open case (i.e., when $\ell=1$);
it is known that $\numcrit{k}{P_4+P_1,H}<\infty$ when $H$ is gem~\cite{AbuadasCameronHoangSawada2022}, paw+$P_1$, or any graph of order $4$~\cite{BeatonCameron2025cogemfreeord4finite}.

The limited progress on vertex-critical $(P_4+\ell P_1)$-free graphs is especially surprising in contrast to the abundance of work on vertex-critical $P_5$-free graphs. Unlike for $(P_4+\ell P_1)$-free graphs, there are known infinite families of $k$-vertex-critical graphs; while $\numcrit{4}{P_5}=12$~\cite{MaffrayMorel2012}, $\numcrit{k}{2P_2,K_3+P_1}=\infty$ for all $k\ge 5$~\cite{Hoang2015} (note that $(2P_2,K_3+P_1)$-free graphs is a proper subfamily of $P_5$-free graphs).
Therefore, attention shifted to considering the finiteness of $k$-vertex-critical graphs in further-restricted subfamilies of $P_5$ free graphs.
In~\cite{KCameron2021}, it was shown that $\numcrit{k}{P_5, H}<\infty$ when $H$ is a graph of order $4$ for all $k$ if and only if $H$ is not $2P_2$ or $K_3+P_1$.
In the same paper, an open problem was posed to determine for which graphs $H$ of order $5$ is $\numcrit{k}{P_5, H}<\infty$ for all $k\ge 1$.
Toward an answer to this question, it has been shown that $\numcrit{k}{P_5, H}<\infty$ for all $k$ when $H$ is 
banner~\cite{Brause2022}, $K_{2,3}$ and $K_{1,4}$~\cite{Kaminski2019}, $P_2 + 3P_1$~\cite{CameronHoangSawada2022}, $P_3 + 2P_1$~\cite{AbuadasCameronHoangSawada2022}, $\overline{P_5}$~\cite{Dhaliwal2017}, $\overline{P_3 + P_2}$ and gem~\cite{CaiGoedgebeurHuang2021} (see also \cite{CameronHoang2023}), dart~\cite{Xiaetal2023}, $K_{1,3} + P_1$ and $\overline{K_3 + 2P_1}$~\cite{xia2024results}, and $W_4$~\cite{P5W4conf}.
Further, $\numcrit{k}{P_5, C_5}=\infty$ for all $k\ge 6$~\cite{CameronHoang2023P5C5}.
Thus, the open problem on $\numcrit{k}{P_5,H}$ when $H$ is of order $5$ only remains open when $H$ is one of the $9$ graphs in Figure~\ref{fig:P5Hfreeopencases}.

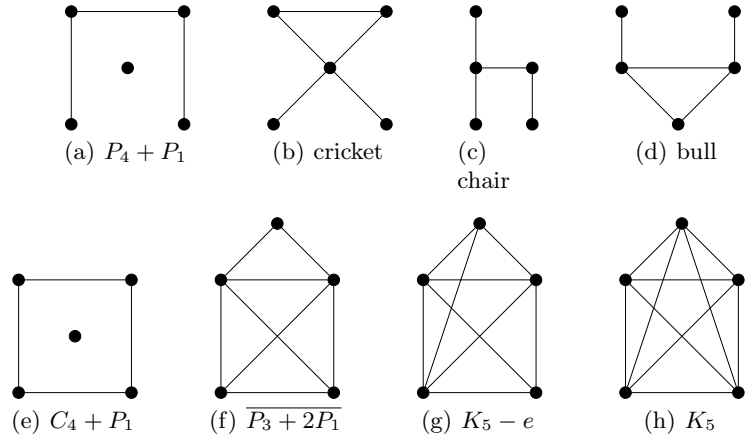
\begin{figure}[h]
\def\c{0.5}
\def\r{1.5}
\centering
\qquad
\subfigure[$P_4+P_1$]{
\scalebox{\c}{
\begin{tikzpicture}
\begin{scope}[every node/.style={circle,fill,draw}]
    \node (u1) at (-1*\r,0*\r) {};
    \node (u2) at (1*\r,0*\r) {};
    \node (u3) at (0*\r,-1*\r) {};
    \node (u4) at (-1*\r,-2*\r) {};
    \node (u5) at (1*\r,-2*\r) {};    
\end{scope}
\begin{scope}
    \path [-] (u1) edge node {} (u2);    
    \path [-] (u5) edge node {} (u2);
    \path [-] (u4) edge node {} (u1);       
\end{scope}
\end{tikzpicture}}
\label{subfig:cogem}
}
\qquad
\subfigure[cricket]{\def\r{1.5}
\centering
\scalebox{\c}{
\begin{tikzpicture}
\begin{scope}[every node/.style={circle,fill,draw}]
    \node (u2) at (-1*\r,0*\r) {};
    \node (u3) at (1*\r,0*\r) {};
    \node (u4) at (0*\r,-1*\r) {};
    \node (w1) at (-1*\r,-2*\r) {};  
    \node (well) at (1*\r,-2*\r) {};    
\end{scope}
\begin{scope}
    \path [-] (u4) edge node {} (w1);
    \path [-] (u4) edge node {} (well);    
    \path [-] (u4) edge node {} (u2);
    \path [-] (u4) edge node {} (u3);
    \path [-] (u2) edge node {} (u3); 
\end{scope}
\end{tikzpicture}}
\label{subfig:cricket}
}
\qquad
\subfigure[chair]{\def\r{1.5}
\centering
\scalebox{\c}{
\begin{tikzpicture}
\begin{scope}[every node/.style={circle,fill,draw}]
    \node (u1) at (0*\r,1*\r) {};
    \node (u2) at (0*\r,0*\r) {};
    \node (u3) at (0*\r,-1*\r) {};
    \node (u4) at (1*\r,0*\r) {};
    \node (u5) at (1*\r,-1*\r) {};    
\end{scope}
\begin{scope}
    \path [-] (u1) edge node {} (u2);   
    \path [-] (u2) edge node {} (u3);  
    \path [-] (u4) edge node {} (u2);  
    \path [-] (u4) edge node {} (u5);   
\end{scope}
\end{tikzpicture}}
\label{subfig:chair}
}
\qquad
\subfigure[bull]{\def\r{1.5}
\centering
\scalebox{\c}{
\begin{tikzpicture}
\begin{scope}[every node/.style={circle,fill,draw}]
    \node (u1) at (-1*\r,1*\r) {};
    \node (u2) at (-1*\r,0*\r) {};
    \node (u3) at (1*\r,0*\r) {};
    \node (u4) at (0*\r,-1*\r) {};
    \node (w1) at (1*\r,1*\r) {};   
\end{scope}

\begin{scope}

    \path [-] (u4) edge node {} (u2);
    \path [-] (u4) edge node {} (u3);
    \path [-] (u2) edge node {} (u3);
    
    \path [-] (u1) edge node {} (u2);
    \path [-] (w1) edge node {} (u3);
    
\end{scope}
\end{tikzpicture}}
\label{subfig:bull}
}
\qquad
\subfigure[$C_4+P_1$]{
\scalebox{\c}{
\begin{tikzpicture}
\begin{scope}[every node/.style={circle,fill,draw}]
    \node (u1) at (-1*\r,0*\r) {};
    \node (u2) at (1*\r,0*\r) {};
    \node (u3) at (0*\r,-1*\r) {};
    \node (u4) at (-1*\r,-2*\r) {};
    \node (u5) at (1*\r,-2*\r) {};    
\end{scope}
\begin{scope}
    \path [-] (u1) edge node {} (u2);    
    \path [-] (u5) edge node {} (u2);
    \path [-] (u4) edge node {} (u1);    
    \path [-] (u4) edge node {} (u5);    
\end{scope}
\end{tikzpicture}}
\label{subfig:C4+P1}
}
\qquad
\subfigure[$\overline{P_3+2P_1}$]{
\scalebox{\c}{
\begin{tikzpicture}
\begin{scope}[every node/.style={circle,fill,draw}]
    \node (u1) at (-1*\r,0*\r) {};
    \node (u2) at (1*\r,0*\r) {};
    \node (u3) at (0*\r,1*\r) {};
    \node (u4) at (-1*\r,-2*\r) {};
    \node (u5) at (1*\r,-2*\r) {};    
\end{scope}
\begin{scope} 
    \path [-] (u1) edge node {} (u3);  
    \path [-] (u2) edge node {} (u3); 
    \path [-] (u1) edge node {} (u2);  
    \path [-] (u1) edge node {} (u4);
    \path [-] (u1) edge node {} (u5);  
    \path [-] (u5) edge node {} (u2);
    \path [-] (u4) edge node {} (u2);   
    \path [-] (u4) edge node {} (u5);    
\end{scope}
\end{tikzpicture}}
\label{subfig:coP3+2P1}
}
\qquad
\subfigure[$K_5-e$]{
\scalebox{\c}{
\begin{tikzpicture}
\begin{scope}[every node/.style={circle,fill,draw}]
    \node (u1) at (-1*\r,0*\r) {};
    \node (u2) at (1*\r,0*\r) {};
    \node (u3) at (0*\r,1*\r) {};
    \node (u4) at (-1*\r,-2*\r) {};
    \node (u5) at (1*\r,-2*\r) {};    
\end{scope}
\begin{scope} 
    \path [-] (u1) edge node {} (u3);  
    \path [-] (u2) edge node {} (u3); 
    \path [-] (u4) edge node {} (u3); 
    \path [-] (u1) edge node {} (u2);  
    \path [-] (u1) edge node {} (u4);
    \path [-] (u1) edge node {} (u5);  
    \path [-] (u5) edge node {} (u2);
    \path [-] (u4) edge node {} (u2);   
    \path [-] (u4) edge node {} (u5);    
\end{scope}
\end{tikzpicture}}
\label{subfig:K5-e}
}
\qquad
\subfigure[$K_5$]{
\scalebox{\c}{
\begin{tikzpicture}
\begin{scope}[every node/.style={circle,fill,draw}]
    \node (u1) at (-1*\r,0*\r) {};
    \node (u2) at (1*\r,0*\r) {};
    \node (u3) at (0*\r,1*\r) {};
    \node (u4) at (-1*\r,-2*\r) {};
    \node (u5) at (1*\r,-2*\r) {};    
\end{scope}
\begin{scope} 
    \path [-] (u1) edge node {} (u3);  
    \path [-] (u2) edge node {} (u3); 
    \path [-] (u1) edge node {} (u2);  
    \path [-] (u1) edge node {} (u4);
    \path [-] (u1) edge node {} (u5);  
    \path [-] (u5) edge node {} (u2);
    \path [-] (u4) edge node {} (u2);   
    \path [-] (u4) edge node {} (u5); 
    \path [-] (u4) edge node {} (u3); 
    \path [-] (u5) edge node {} (u3);    
\end{scope}
\end{tikzpicture}}
\label{subfig:K5}
}
\caption{Graphs $H$ of order $5$ where the finiteness of $k$-vertex-critical $(P_5,H)$-free graphs is unknown.}\label{fig:P5Hfreeopencases}
\end{figure}

Note that the finiteness of $\numcrit{k}{P_5,P_4+P_1}$ is not known for any $k\ge 5$, thus linking interest in the two open problems discussed above. In this paper, we provide the first results on $k$-vertex-critical $(P_4+\ell P_1,H)$-free graphs for $\ell >1$ and, in fact, all of our results hold for all $\ell\ge 0$.
To state our results, we will first require the definitions of two graph families.
Let $\broom{n}{m}$ be the graph obtained from a path of order $n$ by identifying one of its leaves with the centre vertex of $K_{1,m}$ (see Figures~\ref{subfig:Fm} and \ref{subfig:Fm+} for examples with $n=3$ and $4$).  Let $\broom{n}{m}^{+}$ be the graph obtained by identifying an edge of $K_3$ with the edge of $\broom{n}{m}$ with endpoints of degrees $2$ and $m$, respectively (see Figure~\ref{subfig:Bm}).

\begin{center}
\begin{figure}[h]
\def\c{0.7}
\def\r{1.5}
\centering
\qquad
\subfigure[$\broom{3}{m}$]{
    \scalebox{\c}{
        \begin{tikzpicture}
        \begin{scope}[every node/.style={circle,fill,draw}]
            \node[label=above:$u_1$,draw] (u1) at (0*\r,1*\r) {};
            \node[label=above:$u_2$,draw] (u3) at (1*\r,0*\r) {};
            \node[label=above:$u_3$,draw] (u4) at (0*\r,-1*\r) {};
            \node[label=below:$w_1$,draw] (w1) at (-1*\r,-2*\r) {};
            \node[label=below:$w_2$,draw] (w2) at (-0.5*\r,-2*\r) {};  
            \node[label=below:$w_{m}$,draw] (well) at (1*\r,-2*\r) {};    
        \end{scope}
        
        \begin{scope}
        
            \path [-] (u4) edge node {} (w1);
            \path [-] (u4) edge node {} (w2);
            \path [-] (u4) edge node {} (well);
            
            \path [-] (u4) edge node {} (u3);
            
            \path [-] (u1) edge node {} (u3);
            
        \end{scope}
        
        \path (w2) -- node[auto=false]{\ldots} (well);
        
        \end{tikzpicture}
    
    }
    \label{subfig:Fm}
}
\qquad
\subfigure[$\broom{4}{m}$.]{
\centering
    \scalebox{\c}{
        \begin{tikzpicture}
        \begin{scope}[every node/.style={circle,fill,draw}]
            \node[label=above:$u_2$,draw] (u1) at (0*\r,1*\r) {};
            \node[label=above:$u_1$,draw] (u2) at (-1*\r,0*\r) {};
            \node[label=above:$u_3$,draw] (u3) at (1*\r,0*\r) {};
            \node[label=above:$u_4$,draw] (u4) at (0*\r,-1*\r) {};
            \node[label=below:$w_1$,draw] (w1) at (-1*\r,-2*\r) {};
            \node[label=below:$w_2$,draw] (w2) at (-0.5*\r,-2*\r) {};  
            \node[label=below:$w_{m}$,draw] (well) at (1*\r,-2*\r) {};    
        \end{scope}
        
        \begin{scope}
        
            \path [-] (u4) edge node {} (w1);
            \path [-] (u4) edge node {} (w2);
            \path [-] (u4) edge node {} (well);
            
            \path [-] (u4) edge node {} (u3);
            
            \path [-] (u1) edge node {} (u2);
            \path [-] (u1) edge node {} (u3);
            
        \end{scope}
        
        \path (w2) -- node[auto=false]{\ldots} (well);
        
        \end{tikzpicture}
    }
\label{subfig:Fm+}
}
\qquad
\subfigure[$\broom{3}{m}^{+}$.]{
    \scalebox{\c}{
        \begin{tikzpicture}
        \begin{scope}[every node/.style={circle,fill,draw}]
            \node[label=above:$u_2$,draw] (u2) at (-1*\r,0*\r) {};
            \node[label=above:$x$,draw] (u3) at (1*\r,0*\r) {};
            \node[label=above:$u_3$,draw] (u4) at (0*\r,-1*\r) {};
            \node[label=below:$w_1$,draw] (w1) at (-1*\r,-2*\r) {};
            \node[label=below:$w_2$,draw] (w2) at (-0.5*\r,-2*\r) {};  
            \node[label=below:$w_{m}$,draw] (well) at (1*\r,-2*\r) {};   
        
            \node[label=above:$u_1$,draw] (x) at (-2*\r,1*\r) {};
            
        \end{scope}
        
        \begin{scope}
        
            \path [-] (u4) edge node {} (w1);
            \path [-] (u4) edge node {} (w2);
            \path [-] (u4) edge node {} (well);
            
            \path [-] (u4) edge node {} (u2);
            \path [-] (u4) edge node {} (u3);
            
            \path [-] (u2) edge node {} (u3);
            \path [-] (u2) edge node {} (x);

        \end{scope}
        
        \path (w2) -- node[auto=false]{\ldots} (well);
        \end{tikzpicture}
    }
    \label{subfig:Bm}
}
\caption{The forbidden induced graphs in Theorems~\ref{thm:P4UellP1F+mBm}} and Corollary~\ref{thm:P4UellP1Fm}\label{fig:FmFm+Bm}
\end{figure}
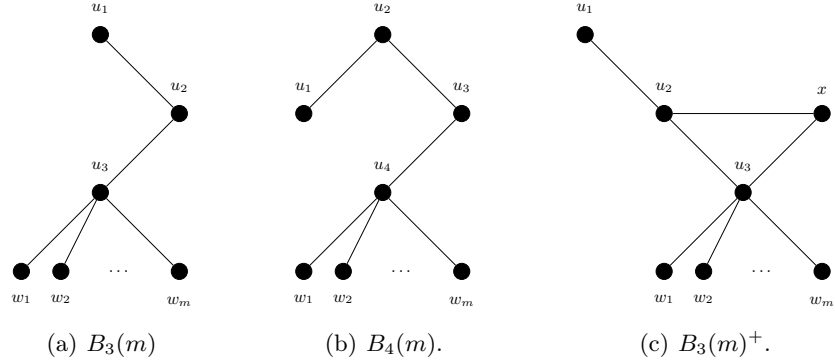
\end{center}

We are now ready to state the two main results of our paper, and discuss some corollaries.

\begin{theorem}\label{thm:P4UellP1F+mBm}
    For all $k\ge 1$ and $\ell,m\ge 0$, $\numcrit{k}{P_4+\ell P_1, \broom{4}{m}, \broom{3}{m}^{+}}<\infty$.
\end{theorem}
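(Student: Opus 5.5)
Our plan is induction on $\ell$, with $k$ and $m$ fixed. The base case $\ell=0$ is immediate: $P_4$-free graphs are perfect, so the only $k$-vertex-critical one is $K_k$. For the inductive step, let $G$ be $k$-vertex-critical and $(P_4+\ell P_1,\broom{4}{m},\broom{3}{m}^{+})$-free. If $G$ is also $(P_4+(\ell-1)P_1)$-free, the induction hypothesis bounds $|V(G)|$. Otherwise $G$ contains an induced $H=P_4+(\ell-1)P_1$, which has $\ell+3$ vertices. Let $N=N(H)\setminus V(H)$ and let $A=V(G)\setminus (V(H)\cup N)$ be the vertices anticomplete to $H$. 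Since $G$ is $(P_4+\ell P_1)$-free, $A$ is $P_4$-free, hence perfect, and $\omega(A)\le k-1$. So the task is to bound $|A|$ and $|N|$ by functions of $k,\ell,m$.

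We will use the standard tools for vertex-critical graphs. First, $G$ has minimum degree at least $k-1$. Second, $G$ has no pair of distinct vertices $u,v$ with $N(u)\subseteq N(v)$ (comparable vertices); the same holds for comparable sets in the more general form used in the earlier $(P_3+\ell P_1)$ and $(P_4+P_1)$ papers, namely no two disjoint $K_t$'s with suitably nested neighbourhoods. Third, we use the $\chi$-bound for $(P_4+\ell P_1)$-free graphs announced in the abstract, which bounds the clique-related quantities we need.

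Partition $N$ according to the neighbourhood trace on $V(H)$. This gives at most $2^{\ell+3}$ classes, so it suffices to bound each class $N_S$, where $S\subseteq V(H)$ is nonempty.

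Within $A$, pick a component $C$. Since $C$ is a connected $P_4$-free graph, it is a join of smaller cographs (cotree structure). Take a vertex $a\in C$ and a vertex $y\in N$ with a neighbour in $C$. Following a path $y$–$h$–$\dots$ inside $H$ (the $P_4$ of $H$ gives length up to $3$) together with $y$ and a neighbour $a$, a $\broom{4}{m}$ appears as soon as $y$ or $a$ has $m$ pairwise nonadjacent private neighbours. Similarly, $\broom{3}{m}^{+}$ appears when a triangle through such a vertex is present. The intended consequence is a bounded-independence statement: every vertex of $N$ sees fewer than $m$ pairwise independent vertices of $A$ off a short induced path. The same forbidden-broom argument, applied with the path taken in $A\cup N$, shows that the stable sets in the neighbourhood of any vertex, restricted to the relevant parts, have size less than $m$. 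Combining this with bounded clique number ($\omega\le k-1$) via Ramsey's theorem, those neighbourhood parts have bounded size.

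Next we bound the number of components of $A$ and their sizes. By the no-comparable-vertices property, two vertices of $A$ with the same neighbourhood in $N$ would be comparable if they are also twins inside $A$. Using the cograph structure and $\omega(A)<k$, each component of $A$ has bounded "depth". Once we know the neighbourhoods into $N$ are bounded, the twin-free condition bounds the sizes. Then $N$ itself is bounded: if some $N_S$ were huge, Ramsey would give a large stable set in $N_S$ all adjacent to a vertex $h\in S$. With a path through $H$ this gives a $\broom{4}{m}$ (if $h$ is an end of an induced $P_3$ avoiding them) or a $\broom{3}{m}^{+}$ (if a triangle arises), unless the stable set's neighbourhoods coincide, which contradicts the no-comparable-vertices property.

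\textbf{Main obstacle.} We expect the hardest step to be ensuring that the needed induced path of length $3$ into a high-degree vertex really is induced, i.e.\ avoids the stable set's neighbours. Our first attempt is a case split on whether the stable set is complete or anticomplete to the path's other vertices, using Ramsey again to homogenize. The cases with partial adjacency then yield either $P_4+\ell P_1$ or the triangle configuration of $\broom{3}{m}^{+}$.
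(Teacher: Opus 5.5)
There is a genuine gap. The bounds on $|N|$ and $|A|$ are asserted rather than proved, and the one concrete mechanism you give fails in exactly the cases that matter.

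To extend a star centred at $h$ (or at $y\in N$) to an induced $\broom{4}{m}$ or $\broom{3}{m}^{+}$, the other path vertices must be anticomplete to all $m$ leaves. Leaves drawn from a class $N_S$ are adjacent to every vertex of $S$. So when $S$ contains the whole $P_4$ of $H$ (for instance, vertices complete to $V(H)$), there is no usable path through $H$ at all.

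Your fallback, ``unless the stable set's neighbourhoods coincide, which contradicts the no-comparable-vertices property'', does not follow. Distinct neighbourhoods only give, for each pair, some distinguishing vertex somewhere in $A\cup N$. That is not an induced $P_4$ ending at a common neighbour and missing $m$ of the leaves. Homogenizing by Ramsey leaves precisely the ``complete'' case open.

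The same failure hits your bounded-independence step. Suppose $y\in N$ is complete to the $P_4$ $p_1p_2p_3p_4$ of $H$, or adjacent only to $p_1,p_3$ (so $yp_1p_2p_3$ is a $C_4$). Then $G[V(H)\cup\{y\}]$ has neither an induced $P_4$ ending at $y$ nor a triangle at $y$. Since your bound on $A$ depends on the bound on $N$, the whole argument rests on these unproved steps. Also, vertex-critical graphs are not twin-free: true twins are allowed, and only non-adjacent comparable pairs are excluded.

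The missing idea is to make the leaves and the path come from sets that are anticomplete by construction. The paper gets this from Theorem~\ref{thm:finiteP3ellP1freecrit}, with no induction on $\ell$. Set $c=\ell+m+1$.
\begin{itemize}
\item If $\alpha(G)<c$, Ramsey bounds $|V(G)|$.
\item Otherwise, if $G$ is not among the finitely many $k$-vertex-critical $(P_3+cP_1)$-free graphs, there is an independent set $I$ with $|I|=c$ such that $G-N[I]$ has a component $C$ containing an induced $P_3$. Since $|I|\ge \ell$, $G-N[I]$ is $P_4$-free.
\item Lemma~\ref{lem:comparablesetsP3nonneighborhood} yields anticomplete $X,Y\subseteq C$ with $N_C(X)=N_C(Y)\neq\emptyset$ complete to both.
\item Lemma~\ref{lem:XY} (the general comparable-sets lemma, not just comparable vertices or cliques) then forces some $x'\sim x\in X$ with $x'\not\sim y\in Y$. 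Necessarily $x'\in N(I)$.
\item Take $z\in N_C(X)$. Then $(P_4+\ell P_1)$-freeness forces $x'$ to have at least $m$ neighbours in $I$.
\item These neighbours together with $y,z,x,x'$ induce $\broom{4}{m}$ if $z\not\sim x'$, and $\broom{3}{m}^{+}$ if $z\sim x'$.
\end{itemize}
Because $y,z,x$ lie in $G-N[I]$, the inducedness you flagged as the main obstacle is automatic. No bound on the neighbourhood of an induced $P_4+(\ell-1)P_1$ is ever needed.
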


\begin{theorem}\label{thm:P4UellP12P2}
    For all $k\ge 1$ and $\ell\ge 0$, $\numcrit{k}{P_4+\ell P_1, 2P_2}<\infty$.
\end{theorem}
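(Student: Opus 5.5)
We prove the statement by induction on $\ell$. The inductive step finds a bounded-size induced copy of $P_4+(\ell-1)P_1$, partitions the rest of the graph by neighbourhood type on it, and bounds each class using $2P_2$-freeness and criticality.

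\textbf{Base case and reduction.} For $\ell=0$, a $k$-vertex-critical $P_4$-free graph is a cograph, hence perfect. A perfect $k$-vertex-critical graph must equal $K_k$, so there is exactly one such graph. For $\ell\ge 1$, let $G$ be $k$-vertex-critical and $(P_4+\ell P_1,2P_2)$-free. If $G$ is $(P_4+(\ell-1)P_1)$-free, then $G$ is one of the finitely many graphs given by the induction hypothesis. Otherwise $G$ contains an induced copy $H$ of $P_4+(\ell-1)P_1$, with $|V(H)|=\ell+3$. No vertex of $G-V(H)$ can be anticomplete to $H$, since such a vertex together with $H$ would form an induced $P_4+\ell P_1$. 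Hence every vertex outside $H$ has a neighbour in $H$. We partition $V(G)\setminus V(H)$ into at most $2^{\ell+3}$ classes $S_X=\{v: N(v)\cap V(H)=X\}$. It therefore suffices to bound $|S_X|$ by a function of $k$ and $\ell$.

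\textbf{Bounding a class.} Since $\omega(G)<k$, Ramsey's theorem reduces this to bounding the size of an independent set $I\subseteq S_X$. We use the standard fact that a vertex-critical graph has no two nonadjacent vertices $u,v$ with $N(u)\subseteq N(v)$. So for distinct $u,v\in I$ there are private neighbours $u'\in N(u)\setminus N(v)$ and $v'\in N(v)\setminus N(u)$.
\begin{itemize}
\item If $u'v'\notin E(G)$, then $\{uu',vv'\}$ induces a $2P_2$, which is forbidden. Hence every such choice has $u'v'\in E(G)$.
\item Consequently $u\,u'\,v'\,v$ is an induced $P_4$.
\item To contradict $(P_4+\ell P_1)$-freeness, we then want $\ell$ further vertices of $I$ that are anticomplete to $\{u',v'\}$. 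They are automatically nonadjacent to $u$, $v$ and to each other, because $I$ is independent.
\end{itemize}

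\textbf{Main obstacle.} The hard step is guaranteeing those $\ell$ extra vertices, since $u'$ or $v'$ may dominate most of $I$. We would handle this with a Ramsey/pigeonhole argument over many pairs.

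Take $|I|$ huge. For each $w\in I$, look at how the private neighbours of other members of $I$ attach to $I$. If some vertex $z$ is adjacent to a large part $I'$ of $I$, we restrict to $I'$ and iterate. A vertex adjacent to everything in $I'$ is of no use as a private neighbour inside $I'$, so the pairs we pick from $I'$ must have private neighbours different from $z$.

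Within each restriction, we find many pairs $u,v$ whose private neighbours $u',v'$ each have only a bounded number of neighbours in the current subset. One way to obtain this is to note that the private neighbours all lie in $N(X)\cup S_{X'}$-type classes of bounded "type". Another is to use $2P_2$-freeness: the sets $N(w)\setminus N(w')$, for $w,w'\in I$, are pairwise complete, which yields a large clique once $|I|$ is large.

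The clique route looks most promising. If $u,v,w\in I$ have pairwise private neighbours, then the $2P_2$ condition forces those private neighbours to be mutually adjacent. With enough vertices this produces a clique of size $k$, contradicting $\omega(G)<k$. Otherwise, the resulting low-degree pairs leave $\ell$ vertices of $I$ free to complete $P_4+\ell P_1$. Either way $|I|$ is bounded, so $|V(G)|$ is bounded, and there are only finitely many such $G$.
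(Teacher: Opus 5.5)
\textbf{There is a genuine gap at the step you flag as the main obstacle.} Your reduction to bounding an independent set $I\subseteq S_X$ via Ramsey is fine. Note, though, that the partition by neighbourhood on $H$ is never used afterwards, so the induction on $\ell$ buys nothing: what you really need is to bound an arbitrary independent set.

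The bound on $|I|$ is never established, and the clique route rests on a false claim. $2P_2$-freeness only forces $N(u)\setminus N(v)$ to be complete to $N(v)\setminus N(u)$ for the \emph{same} pair $u,v$. Sets arising from different pairs need not be complete to each other; they may even share non-adjacent vertices. For example, take a private neighbour $u'$ of $u$ with respect to $v$ and a private neighbour $w'$ of $w$ with respect to $u$. These are forced to be adjacent only if additionally $u'\not\sim w$. Criticality gives you only that the neighbourhoods of vertices of $I$ are pairwise incomparable, i.e.\ an antichain. It does not give $k$ vertices of $I$ with ``exclusive'' neighbours. So you obtain one forced edge per pair, not a $K_k$. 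Your other branch (``low-degree pairs'') is unspecified. Nothing prevents $u'$ and $v'$ from jointly dominating all but fewer than $\ell$ vertices of $I$, and that is exactly what blocks extending your $P_4$ $u\,u'\,v'\,v$ to a $P_4+\ell P_1$.

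\textbf{The missing idea is to apply $2P_2$-freeness to pairs of non-adjacent vertices \emph{outside} the independent set, not to pairs inside it.} The paper does this for the independent part $S$ of an induced $P_3+cP_1$ with $c=\binom{k\ell}{\lfloor k\ell/2\rfloor}$.
\begin{enumerate}
\item Take the vertices mixed on $S$ and keep one representative per trace on $S$; call this set $U$.
\item The traces $N(s)\cap U$, $s\in S$, form an antichain, by the no-comparable-neighbourhoods property. Sperner's theorem then gives $|U|\ge k\ell$.
\item Two non-adjacent vertices of $U$ must have nested traces on $S$, otherwise a $2P_2$ appears. Hence an independent set $u_1,\dots,u_{\ell+1}$ in $U$ is a strict chain.
\item Take $s_1\in N(u_1)\cap S$ and $s_2\in (N(u_2)\cap S)\setminus N(u_1)$. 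Then $u_1 s_1 u_2 s_2$ is an induced $P_4$. Add $\ell$ vertices of $S\setminus N(u_2)$; these exist by strictness and mixedness, and they automatically miss $u_1$. This gives an induced $P_4+\ell P_1$.
\item Therefore $\alpha(G[U])\le\ell$, so $\chi(G[U])\ge k$. Since $U$ is a proper subset of $V(G)$, this contradicts criticality.
\end{enumerate}
The nesting is precisely what supplies the $\ell$ isolated vertices your argument cannot find. Incidentally, if $S$ is taken large enough that $|U|\ge R(k,\ell+1)$, this also yields the $K_k$ your clique route was aiming for.
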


Despite its generality, we give a very short and easy to follow proof of Theorem~\ref{thm:P4UellP1F+mBm}. 
We achieve this by a new technical lemma that imposes helpful structure on the non-neighbours of large independent sets of $(P_4+\ell P_1)$-free graphs that we use in conjunction with the aforementioned result that $\numcrit{k}{P_3+\ell P_1}<\infty$ for all $k\ge 1$ and $\ell\ge 0$. 
The generality of Theorem~\ref{thm:P4UellP1F+mBm} leads to many specific corollaries of interest, some of which we now list. 
First, since $\broom{3}{m}$ is an induced subgraph of both $\broom{3}{m}^{+}$ and $\broom{4}{m}$, we get.

\begin{corollary}\label{thm:P4UellP1Fm}
     For all $k\ge 1$ and $\ell,m\ge 0$, $\numcrit{k}{P_4+\ell P_1, \broom{3}{m}}<\infty$.
\end{corollary}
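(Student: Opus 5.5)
The plan is to derive Corollary~\ref{thm:P4UellP1Fm} directly from Theorem~\ref{thm:P4UellP1F+mBm}. We show that every $(P_4+\ell P_1,\broom{3}{m})$-free graph is also $(P_4+\ell P_1,\broom{4}{m},\broom{3}{m}^{+})$-free. This gives the containment
\[
\critset{k}{P_4+\ell P_1,\broom{3}{m}}\subseteq\critset{k}{P_4+\ell P_1,\broom{4}{m},\broom{3}{m}^{+}},
\]
and the right-hand side is finite by Theorem~\ref{thm:P4UellP1F+mBm}. The only thing to verify is therefore that $\broom{3}{m}$ is an induced subgraph of both $\broom{4}{m}$ and $\broom{3}{m}^{+}$. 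Indeed, if $H'$ is an induced subgraph of $H$, then any $H'$-free graph is $H$-free, since an induced copy of $H$ would contain an induced copy of $H'$.

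For $\broom{4}{m}$, we use the labelling of Figure~\ref{subfig:Fm+}: the path is $u_1u_2u_3u_4$, and $u_4$ is adjacent to $w_1,\dots,w_m$. Deleting the leaf $u_1$ leaves the path $u_2u_3u_4$ with $u_4$ adjacent to all $w_i$. This is exactly $\broom{3}{m}$, and it is induced because no edges are added when we restrict to these vertices.

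For $\broom{3}{m}^{+}$, we use the labelling of Figure~\ref{subfig:Bm}: the edges are $u_1u_2$, $u_2u_3$, $u_2x$, $u_3x$, and $u_3w_i$ for all $i$. Deleting $u_1$ leaves the vertices $u_2,x,u_3,w_1,\dots,w_m$, which contain a triangle, so we must choose the vertices more carefully. Instead, delete $u_2$. The remaining graph has edges $xu_3$ and $u_3w_i$ together with the isolated vertex $u_1$, so that choice fails as well. The correct choice is to delete $x$. The remaining vertices are $u_1,u_2,u_3,w_1,\dots,w_m$, with edges $u_1u_2$, $u_2u_3$, and $u_3w_i$. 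This is the path $u_1u_2u_3$ with the leaf $u_3$ identified with the centre of $K_{1,m}$, that is, an induced $\broom{3}{m}$.

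The degenerate case $m=0$ needs a quick check: $\broom{3}{0}=P_3$, and the same vertex deletions still work. Once both containments are established, the corollary follows immediately. There is no real obstacle here; the only care needed is choosing the correct vertex ($x$) to delete in $\broom{3}{m}^{+}$.
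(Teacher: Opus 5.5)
Your proposal is correct and matches the paper's argument: the paper likewise derives the corollary from Theorem~\ref{thm:P4UellP1F+mBm} by noting that $\broom{3}{m}$ is an induced subgraph of both $\broom{4}{m}$ and $\broom{3}{m}^{+}$. Your explicit checks are right: delete $u_1$ from $\broom{4}{m}$, and delete $x$ from $\broom{3}{m}^{+}$.
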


\noindent Since $\broom{3}{2}$ is chair, we get the even more specific corollary that follows.

\begin{corollary}\label{cor:P4UellP1chair}
    For all $k\ge 1$ and $\ell,m\ge 0$, $\numcrit{k}{P_4+\ell P_1, \operatorname{chair}}<\infty$.
\end{corollary}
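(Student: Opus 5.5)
This corollary should follow directly from Corollary~\ref{thm:P4UellP1Fm} (and hence from Theorem~\ref{thm:P4UellP1F+mBm}) by identifying chair as a member of the broom family. From the definition, $\broom{3}{2}$ is obtained from a path $u_1u_2u_3$ by identifying the leaf $u_3$ with the centre of $K_{1,2}$, which has leaves $w_1,w_2$. The resulting graph has five vertices $u_1,u_2,u_3,w_1,w_2$ and edges $u_1u_2$, $u_2u_3$, $u_3w_1$, $u_3w_2$. This is exactly the chair: a claw centred at $u_3$ with leaves $u_2,w_1,w_2$, together with one edge $u_1u_2$ subdividing one of its arms. Comparing with the drawing of chair in Figure~\ref{fig:P5Hfreeopencases}, the vertex of degree $3$ is $u_3$ and the vertex of degree $2$ is $u_2$, so the two graphs are isomorphic.

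Given this isomorphism, a graph is $(P_4+\ell P_1,\operatorname{chair})$-free if and only if it is $(P_4+\ell P_1,\broom{3}{2})$-free. Taking $m=2$ in Corollary~\ref{thm:P4UellP1Fm} therefore gives $\numcrit{k}{P_4+\ell P_1,\operatorname{chair}}=\numcrit{k}{P_4+\ell P_1,\broom{3}{2}}<\infty$ for all $k\ge 1$ and $\ell\ge 0$. The parameter $m$ in the statement plays no role, so the conclusion holds for every $m\ge 0$.

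If one prefers to cite Theorem~\ref{thm:P4UellP1F+mBm} directly, the only additional check is that $\broom{3}{m}$ is an induced subgraph of both $\broom{4}{m}$ and $\broom{3}{m}^{+}$. In $\broom{4}{m}$, delete the end vertex of the path farthest from the star centre. In $\broom{3}{m}^{+}$, delete the triangle vertex $x$. In each case the remaining graph is $\broom{3}{m}$. Consequently every chair-free graph is both $\broom{4}{2}$-free and $\broom{3}{2}^{+}$-free. The critical graphs in question then form a subset of $\critset{k}{P_4+\ell P_1,\broom{4}{2},\broom{3}{2}^{+}}$, which is finite by Theorem~\ref{thm:P4UellP1F+mBm}. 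No step here is a real obstacle; the only care needed is verifying the isomorphism $\broom{3}{2}\cong\operatorname{chair}$.
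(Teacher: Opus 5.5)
Your proposal is correct and follows the paper's route exactly: the paper simply observes that $\broom{3}{2}$ is the chair and applies Corollary~\ref{thm:P4UellP1Fm} with $m=2$. Your alternative check via Theorem~\ref{thm:P4UellP1F+mBm} just reproduces the paper's own justification of that corollary, and you are right that the parameter $m$ in the statement plays no role.
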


\noindent Since $P_5$ is $\broom{4}{1}$, bull is $\broom{3}{1}^{+}$ and cricket is the graph obtained from $\broom{3}{2}^{+}$ by deleting the leaf adjacent to the vertex of degree 3, we obtain the following immediate corollaries.

\begin{corollary}\label{cor:bull}
    For all $k\ge 1$, $\numcrit{k}{P_4+\ell P_1,P_5, \operatorname{bull}}<\infty$.
\end{corollary}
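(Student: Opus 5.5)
This follows from Theorem~\ref{thm:P4UellP1F+mBm} by a containment argument. It suffices to show that every $(P_4+\ell P_1,P_5,\operatorname{bull})$-free graph is also $(P_4+\ell P_1,\broom{4}{m},\broom{3}{m}^{+})$-free for a suitable $m$. Then $\critset{k}{P_4+\ell P_1,P_5,\operatorname{bull}}\subseteq\critset{k}{P_4+\ell P_1,\broom{4}{m},\broom{3}{m}^{+}}$, and the latter set is finite. I take $m=1$.

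The key steps are two identifications of small graphs.
\begin{enumerate}
\item $\broom{4}{1}$ is a path $u_1u_2u_3u_4$ with one extra leaf $w_1$ attached to the end vertex $u_4$. This is exactly $P_5$, so $P_5$-free graphs are $\broom{4}{1}$-free.
\item $\broom{3}{1}$ is the path $u_1u_2u_3w_1$. To form $\broom{3}{1}^{+}$, take the edge $u_2u_3$, whose endpoints have degrees $2$ and $m=1$, and add a vertex $x$ adjacent to both $u_2$ and $u_3$. The result is the triangle $u_2u_3x$ with pendant vertex $u_1$ at $u_2$ and pendant vertex $w_1$ at $u_3$. This is precisely the bull, so bull-free graphs are $\broom{3}{1}^{+}$-free.
\end{enumerate}
Hence $(P_4+\ell P_1,P_5,\operatorname{bull})$-free graphs form a subfamily of $(P_4+\ell P_1,\broom{4}{1},\broom{3}{1}^{+})$-free graphs. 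Theorem~\ref{thm:P4UellP1F+mBm} with $m=1$ then gives $\numcrit{k}{P_4+\ell P_1,P_5,\operatorname{bull}}<\infty$ for all $k\ge1$ and $\ell\ge0$.

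There is no real obstacle here. The only care needed is in step 2: the degree-$m$ endpoint is $u_3$, and when $m=1$ its degree in $\broom{3}{1}$ is actually $2$. So one should read the construction as gluing the triangle onto the edge $u_2u_3$ between the middle path vertex and the broom centre, as in the paper's figure of $\broom{3}{m}^{+}$, rather than relying on the degree labels.
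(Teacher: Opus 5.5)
Your proof is correct and is exactly the paper's argument: the corollary follows from Theorem~\ref{thm:P4UellP1F+mBm} with $m=1$, using the identifications $\broom{4}{1}\cong P_5$ and $\broom{3}{1}^{+}\cong\operatorname{bull}$. Your caution about reading the gluing edge as $u_2u_3$ from the figure, rather than from the literal degree labels, is well placed: the broom centre actually has degree $m+1$, so a literal reading with $m=1$ would pick an end edge of $P_4$ and produce a different graph.
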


\begin{corollary}\label{cor:cricket}
    For all $k\ge 1$, $\numcrit{k}{P_4+\ell P_1,P_5, \operatorname{cricket}}<\infty$.
\end{corollary}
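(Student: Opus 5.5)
This follows directly from Theorem~\ref{thm:P4UellP1F+mBm} with $m=2$. The plan is to show that every $(P_4+\ell P_1,P_5,\operatorname{cricket})$-free graph is $(P_4+\ell P_1,\broom{4}{2},\broom{3}{2}^{+})$-free. Then
\[
\critset{k}{P_4+\ell P_1,P_5,\operatorname{cricket}}\subseteq \critset{k}{P_4+\ell P_1,\broom{4}{2},\broom{3}{2}^{+}},
\]
and the right-hand side is finite for every $k\ge 1$ and $\ell\ge 0$ by Theorem~\ref{thm:P4UellP1F+mBm}.

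For the inclusion it suffices to check two facts, since a graph containing an induced copy of $H$ also contains every induced subgraph of $H$.

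First, $P_5$ is an induced subgraph of $\broom{4}{2}$. With the labelling of Figure~\ref{subfig:Fm+}, the vertices $u_1,u_2,u_3,u_4,w_1$ form the path $u_1u_2u_3u_4w_1$. This path is induced because $\broom{4}{2}$ is a tree. Hence a $P_5$-free graph is $\broom{4}{2}$-free.

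Second, the cricket is an induced subgraph of $\broom{3}{2}^{+}$. With the labelling of Figure~\ref{subfig:Bm}, $\broom{3}{2}^{+}$ consists of:
\begin{itemize}
\item the triangle $u_2xu_3$;
\item the leaf $u_1$ attached to $u_2$;
\item the two leaves $w_1,w_2$ attached to $u_3$.
\end{itemize}
Deleting $u_1$ leaves a triangle one of whose vertices, $u_3$, carries two pendant vertices $w_1,w_2$, with no further edges. This is exactly the cricket, as noted before Corollary~\ref{cor:cricket}. Hence a cricket-free graph is $\broom{3}{2}^{+}$-free.

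There is no real obstacle here. The only care needed is to confirm that the copies are induced, which holds because the only edges among the chosen vertices are those listed. All the substantive work lies in Theorem~\ref{thm:P4UellP1F+mBm}.
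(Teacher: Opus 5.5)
Your proposal is correct and is essentially the paper's argument. The paper deduces the corollary directly from Theorem~\ref{thm:P4UellP1F+mBm}, noting that $P_5=\broom{4}{1}$ and that the cricket is $\broom{3}{2}^{+}$ minus the leaf at its degree-$3$ vertex; your choice of $m=2$ for both brooms, with $P_5$ induced in $\broom{4}{2}$, is exactly the single parameter value that makes this one application of the theorem go through, and your induced-subgraph checks against the figure labellings are accurate.
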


Theorems~\ref{thm:P4UellP1F+mBm} and \ref{thm:P4UellP12P2} provide the best evidence to date that $\numcrit{k}{P_4+\ell P_1, P_5}$ might be finite for all $k$. Further, Corollaries~\ref{cor:P4UellP1chair}, \ref{cor:bull}, and \ref{cor:cricket}  are likely to be helpful for resolving three of the 9 remaining open cases of the open problem stated from~\cite{KCameron2021} stated above, as a now only graphs containing an induced $P_4+\ell P_1$ for any fixed $\ell$ depending only on the chromatic number need to be considered. We note that $\numcrit{k}{P_5,\operatorname{bull}}$ and $\numcrit{k}{P_5,\operatorname{chair}}$ are of particular interest as it is known that each is finite for $k=5$ as shown in \cite{HuangLiXia2023}, and \cite{HuangLi2023}, respectively (and in fact it was further shown that $\numcrit{5}{P_6,\operatorname{bull}}<\infty$~\cite{P6bullfree}).

\subsection{Outline}

The rest of the paper is organized as follows. We first present some preliminary results and terminology in Section~\ref{sec:prelims} that will be used throughout our proofs of our main results. 
In Section~\ref{sec:P5bull}, we prove Theorem~\ref{thm:P4UellP1F+mBm}, and
in Section~\ref{sec:2P2}, we prove Theorem~\ref{thm:P4UellP12P2}. 
In Section~\ref{sec:K_k}, where we show that for all $\ell\ge 0$ and $k\ge 3$, every $(P_4+\ell P_1, K_k)$-free graph $G$ has 

    $$\chi(G) \leq \ell^{k-2}+2\ell^{k-3}+3\ell^{k-4}\cdots+(k-2)\ell+(k-1).$$
We conclude in Section~\ref{sec:conclusion} with a discussion on future directions.

\section{Preliminaries}\label{sec:prelims}

    For standard graph theory terminology and notation, we follow~\cite{WestGraphTheoryBook}. We denote to adjacent vertices $u$ and $v$ by $u \sim v$. Two vertices $u$ and $v$ are \emph{twins} if $N(u)=N(v)$ or $N[u]=N[v]$. We say $u$ and $v$ are \emph{false twins} if $N(u)=N(v)$ and \emph{true twins} if $N[u]=N[v]$. Two disjoint sets of vertices $X$ and $Y$ are \emph{complete} to each other if every vertex in $X$ is adjacent to every vertex in $Y$. We say $X$ and $Y$ are \emph{anti-complete} if there are no edges between the two sets. A vertex is \emph{mixed} on a set of vertices $X$ if it is not complete nor anti-complete to $X$, that is, it is adjacent to at least one vertex in $X$ and non-adjacent to at least one other.

    We now state key lemma and theorem that will be applied in our main theorems.

     \begin{lemma}[\cite{KCameron2021}] \label{lem:XY}
		Let $G$ be a $k$-vertex-critical graph. There does not exist two vertex subsets $X, Y \subseteq V(G)$ satisfying all of the following conditions.
		\begin{itemize}
			\item $X$ and $Y$ are anticomplete to each other.
			\item $\chi(G[X])\le\chi(G[Y])$.
			\item Y is complete to $N(X)$.
		\end{itemize}
	\end{lemma}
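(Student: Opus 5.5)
We argue by contradiction. Suppose $G$ is $k$-vertex-critical and $X,Y\subseteq V(G)$ satisfy all three conditions. Then $X\neq\emptyset$, since otherwise there is nothing to prove. The plan is to delete $X$, colour the rest, and recolour $X$ using colours already present on $Y$.

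First, since $G$ is $k$-vertex-critical and $X$ is nonempty, removing any vertex of $X$ drops the chromatic number. So $G-X$ has a proper $(k-1)$-colouring $c$. Let $C=c(Y)$, the set of colours appearing on $Y$. We have $|C|\ge\chi(G[Y])\ge\chi(G[X])$, because $c$ restricted to $Y$ is a proper colouring of $G[Y]$.

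Next, fix a proper colouring of $G[X]$ with $\chi(G[X])$ colours. Inject its colour classes into $C$, which is possible since $|C|\ge\chi(G[X])$, and assign each vertex of $X$ the image colour. Keep $c$ on $V(G)\setminus X$. This gives a $(k-1)$-colouring of all of $G$, and we check that it is proper. Edges inside $X$ are fine because the colouring of $G[X]$ was proper and the injection keeps distinct classes distinct. Edges inside $V(G)\setminus X$ are fine by the choice of $c$. Now take an edge $xv$ with $x\in X$ and $v\notin X$. Then $v\in N(X)$. Since $X$ and $Y$ are anticomplete, $v\notin Y$. Since $Y$ is complete to $N(X)$, $v$ is adjacent to every vertex of $Y$, so $c(v)\notin C$. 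But $x$ received a colour in $C$, so this edge is properly coloured.

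Hence $\chi(G)\le k-1$, contradicting $\chi(G)=k$. The only step needing care is $c(v)\notin C$ for $v\in N(X)$, and this is precisely where anticompleteness (which gives $N(X)\cap Y=\emptyset$) and completeness of $Y$ to $N(X)$ are used. The remaining steps are routine.
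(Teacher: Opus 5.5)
Your argument is correct, and it is the standard recolouring proof behind this lemma; the paper itself does not prove it but quotes it from \cite{KCameron2021}. You take a $(k-1)$-colouring of $G-X$ and recolour $X$ with colours that appear on $Y$, which cannot appear on $N(X)$ since $N(X)\cap Y=\emptyset$ and $Y$ is complete to $N(X)$.

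One correction: the case $X=\emptyset$ is not vacuous. There all three conditions hold for every $Y$, so the lemma as literally stated is false in that case. It must be read with the hypothesis $X\neq\emptyset$, which then forces $Y\neq\emptyset$. You should state that assumption rather than say there is nothing to prove.
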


\begin{theorem}[\cite{AbuadasCameronHoangSawada2022}]\label{thm:finiteP3ellP1freecrit}
There are only finitely many $k$-vertex-critical $(P_3+\ell P_1)$-free graphs for all $k\ge 1$ and $\ell \ge 0$.
\end{theorem}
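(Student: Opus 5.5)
Although Theorem~\ref{thm:finiteP3ellP1freecrit} is quoted from~\cite{AbuadasCameronHoangSawada2022}, here is how I would prove it. The overall strategy is to bound $|V(G)|$ by a function of $k$ and $\ell$ for every $k$-vertex-critical $(P_3+\ell P_1)$-free graph $G$. Since $\omega(G)\le k$, Ramsey's theorem shows it suffices to bound the independence number $\alpha(G)$. Indeed, once $\alpha(G)<a$ we get $|V(G)|< R(k+1,a)$.

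I would proceed by induction on $\ell$. For $\ell=0$, a $P_3$-free graph is a disjoint union of cliques, so the only $k$-vertex-critical one is $K_k$. For the inductive step, let $G$ be $k$-vertex-critical and $(P_3+\ell P_1)$-free. If $G$ is also $(P_3+(\ell-1)P_1)$-free, we are done by induction. Otherwise $G$ contains an induced copy $S$ of $P_3+(\ell-1)P_1$ with $|S|=\ell+2$. Every vertex outside $S$ has a neighbour in $S$, since otherwise $G$ would contain an induced $P_3+\ell P_1$. So $V(G)\setminus S$ splits into at most $2^{\ell+2}$ classes according to trace on $S$. It then suffices to bound the size of an independent set $I$ contained in one class.

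Two tools bound such an independent set $I$. The first is Lemma~\ref{lem:XY} applied with $X=\{u\}$ and $Y=\{v\}$ for $u,v\in I$. It shows that $N(u)\not\subseteq N(v)$, so for each ordered pair there is a vertex $w_{uv}$ with $w_{uv}\sim u$ and $w_{uv}\not\sim v$. The second is the forbidden subgraph. If a vertex $w$ has two neighbours $u,u'\in I$, then $u$--$w$--$u'$ is an induced $P_3$. Hence $w$ is non-adjacent to fewer than $\ell$ vertices of $I$, otherwise those non-neighbours together with the $P_3$ give an induced $P_3+\ell P_1$.

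Consequently, every vertex outside $I$ is of one of three kinds relative to $I$: anticomplete to $I$, adjacent to exactly one vertex of $I$, or adjacent to all but at most $\ell-1$ vertices of $I$. Now fix distinct $u_1,\dots,u_{\ell+2}\in I$ and consider the witnesses $w_{u_iu_j}$. A witness of the third kind misses some $u_j$, and it misses at most $\ell-1$ vertices of $I$ in total. A witness of the second kind, say $p$, is a private neighbour of a single $u_i$. With $p\sim u_i$ and a second vertex we try to build an induced $P_3$ through $u_i$: either $p$ has a further neighbour anticomplete to $u_i$, or two private neighbours $p,p'$ of $u_i$ are non-adjacent (giving $p$--$u_i$--$p'$). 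In either case the $P_3$ together with $\ell$ other vertices of $I$ would be an induced $P_3+\ell P_1$.

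The main obstacle is making this case analysis close up. That is, I must show that when $|I|$ is large (say larger than some $f(k,\ell)$), the private neighbourhoods are forced to be cliques that are complete to each other in a way that lets Lemma~\ref{lem:XY} apply to larger sets $X,Y$. For example, I would take $X$ to be a vertex together with its private clique and $Y$ a comparable structure with at least as large chromatic number, giving a contradiction. I would first try to show that the private neighbours of each $u\in I$ form a clique of size less than $k$ and that the third-kind vertices form a set of bounded independence number. Then I would use pigeonhole on the at most $\binom{|I|}{<\ell}$ possible ``miss-sets'' to find two vertices of $I$ violating Lemma~\ref{lem:XY}. This bounds $|I|$, hence each class, hence $\alpha(G)$, completing the induction.
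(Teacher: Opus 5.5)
The paper does not prove this theorem; it is quoted from \cite{AbuadasCameronHoangSawada2022}. Your proposal therefore has to stand on its own, and it has a genuine gap. The one step that carries the weight, bounding an independent set $I$ inside a trace class, is never carried out. Your final paragraph lists things you would try, not an argument.

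Your preliminary facts are correct:
\begin{itemize}
\item the Ramsey reduction;
\item the trichotomy: a vertex outside $I$ is anticomplete to $I$, private to one vertex of $I$, or misses at most $\ell-1$ vertices of $I$;
\item each private neighbourhood $P_u$ is a clique once $|I|\ge\ell+1$;
\item $N(u)\not\subseteq N(v)$ for $u,v\in I$, from Lemma~\ref{lem:XY}.
\end{itemize}
But the one contradiction your sketch claims is false. Suppose $p\in P_{u_i}$ has a neighbour $q\not\sim u_i$ of the third kind. Then $q$ misses at most $\ell-2$ vertices of $I\setminus\{u_i\}$, so no $\ell$ vertices of $I$ are anticomplete to $u_i$--$p$--$q$. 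For $|I|\ge\ell+2$, all the argument yields is $N(p)\subseteq\{u_i\}\cup P_{u_i}\cup C$, where $C$ is the set of third-kind vertices. Note also that the induction on $\ell$ and the trace classes play no role afterwards. The one thing a class gives you, a vertex of $S$ complete to $I$, is never used.

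More importantly, no refinement of this local analysis can bound $|I|$. For $\ell\ge 2$, let $G_N$ be $K_{N,N}$ minus a perfect matching, with sides $I=\{u_1,\dots,u_N\}$ and $C=\{c_1,\dots,c_N\}$, and $u_i\sim c_j$ if and only if $i\ne j$. For an induced $P_3$ of the form $u_i c_j u_m$, the only vertex anticomplete to it is $u_j$, and symmetrically for $c_i u_j c_m$. So $G_N$ is $(P_3+2P_1)$-free. Moreover:
\begin{itemize}
\item every $c_j$ is of the third kind;
\item there are no private neighbours;
\item no two non-adjacent vertices have nested neighbourhoods;
\item $C$ is an independent set of size $N$.
\end{itemize}
So every fact your sketch extracts holds with $|I|=N$ arbitrary. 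In particular, ``the third-kind vertices have bounded independence number'' does not follow from those facts and is essentially as strong as the theorem itself.

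The same example defeats your proposed sets $X=\{u\}\cup P_u$ and $Y=\{v\}\cup P_v$, which here are $X=\{u_i\}$ and $Y=\{u_j\}$. The vertex $c_j\in N(X)$ misses $u_j$, so $Y$ is not complete to $N(X)$. The pigeonhole on miss-sets also gives nothing. Each $c_j$ has its own miss-set $\{u_j\}$, and once $\ell\ge 2$ there are more than $|I|$ possible miss-sets.

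Since $G_N$ is bipartite, what excludes such configurations in a $k$-vertex-critical graph must be a genuinely chromatic, global use of criticality, not pairwise neighbourhood comparisons. That ingredient is the heart of the theorem, and the proposal does not supply it.
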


\section{$(P_4+\ell P_1$, $\broom{4}{m}$, $\broom{3}{m}^{+})$-free}\label{sec:P5bull}

Before we can prove Theorem~\ref{thm:P4UellP1F+mBm}, we require the following technical lemma on the structure of $P_4$-free graphs that we will apply to the non-neighbours of a large independent set. Note that every component in a $P_4$-free graph can be constructed from a single vertex and iteratively choosing a vertex $v$ and adding a copy of $v$ which is either a false twin or true twin to $v$ (see \cite{brandstadt1999graph}, Theorem 11.3.3). 

\begin{lemma}\label{lem:comparablesetsP3nonneighborhood}
    Let $G$ be a connected $P_4$-free graph. If $G$ contains an induced $P_3$ then there exists disjoint sets $X$ and $Y$ which are anticomplete to each other such that $Y$ is complete to $N(X)\neq\emptyset$ and $X$ is complete to $N(Y)$ (and therefore, $N(X)=N(Y)\neq\emptyset$).
\end{lemma}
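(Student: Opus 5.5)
The plan is to use the cotree (modular decomposition) characterisation of $P_4$-free graphs rather than the twin-construction description; both are in \cite{brandstadt1999graph}. Every $P_4$-free graph on at least two vertices is either disconnected or has disconnected complement. Recursively, $G$ is represented by a rooted tree whose leaves are the vertices of $G$ and whose internal nodes are labelled $0$ (disjoint union) or $1$ (join). Two vertices are adjacent exactly when their lowest common ancestor is a $1$-node. We may assume that a parent and child never carry the same label. For a node $t$, let $V(t)$ denote the set of leaves below $t$.

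First, since $G$ contains an induced $P_3$, it has two nonadjacent vertices $u,v$. Their lowest common ancestor is therefore a $0$-node $t$. Let $C_1,\dots,C_s$ ($s\ge 2$) be the leaf sets of the children of $t$. Set $X=V(C_1)$ and $Y=V(C_2)$. These are disjoint, and they are anticomplete because $t$ is a $0$-node.

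The key step is to compute $N(X)$. A vertex $w\notin X$ adjacent to some $x\in X$ cannot lie in $V(t)$: inside $V(t)$, the only vertices outside $X$ are in other children of $t$, and those are anticomplete to $X$. For $w\notin V(t)$, the lowest common ancestor of $w$ and any vertex of $V(t)$ is the same node, namely the lowest common ancestor of $w$ and $t$. Hence $w$ is either complete or anticomplete to all of $V(t)$. It follows that $N(X)=M$, where $M$ is the set of vertices outside $V(t)$ that are complete to $V(t)$. By the symmetric argument $N(Y)=M$ as well. Since $M$ is complete to $V(t)\supseteq X\cup Y$, we get that $Y$ is complete to $N(X)$ and $X$ is complete to $N(Y)$.

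It remains to show $M\neq\emptyset$, which I expect to be the only point needing care. Because $G$ is connected with at least two vertices, the root is a $1$-node, so the $0$-node $t$ is not the root. Let $p$ be the parent of $t$. By the alternation of labels, $p$ is a $1$-node, and it has another child $t'$. Every vertex of $V(t')$ has lowest common ancestor $p$ with every vertex of $V(t)$, so $V(t')\subseteq M$, and $M\ne\emptyset$.
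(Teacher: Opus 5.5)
Your proof is correct, but it takes a different route from the paper.

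\textbf{The paper's route.} It argues by induction on $|V(G)|$, using only the fact (from the twin-construction characterization it cites) that a $P_4$-free graph has a pair of twins $u,v$. False twins immediately give $X=\{u\}$, $Y=\{v\}$. For true twins, it applies induction to $G-v$. It then adds $v$ to $X$ or to $Y$ if $u$ lies there, and otherwise keeps $X,Y$ unchanged, checking separately the cases $u\in W$ and $u\notin W$, where $W=N(X)=N(Y)$.

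\textbf{Your route.} You work globally in the cotree. Any two children of a $0$-node $t$ give anticomplete sets $X,Y$. The neighbourhood of each equals the set $M$ of vertices complete to the module $V(t)$. Connectivity then forces $t$ to have a $1$-node parent, whose other children supply $M\neq\emptyset$.

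\textbf{What each buys.} Your argument is non-inductive and describes all admissible pairs explicitly: any two children of any $0$-node work, and $N(X)=N(Y)$ is the full neighbourhood of $V(t)$. It also avoids checks the paper leaves implicit, namely that $G-v$ stays connected and still contains an induced $P_3$. The paper's argument needs only the twin characterization it already quotes, at the cost of a small case analysis. Both proofs use the induced $P_3$ only to obtain a non-adjacent pair.

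\textbf{Two small fixes.}
\begin{itemize}
\item The existence of the second child $t'$ of $p$ relies on every internal node of the canonical cotree having at least two children. This holds because each node arises from splitting into at least two components or co-components, but you should state it.
\item Since you defined the $C_i$ as leaf sets, write $X=C_1$ and $Y=C_2$ rather than $V(C_1)$ and $V(C_2)$.
\end{itemize}
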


\begin{proof}
    We will proceed by induction on the order of $G$, a connected $P_4$-free graph with an induced $P_3$. 
    If $G$ has three vertices, then $G \cong P_3$, and $X=\{x\}$ and $Y=\{y\}$ satisfies the conclusion when $x$ and $y$ are the leaves of the induced $P_3$.
    Suppose that our claim is true for all connected $P_4$-free graphs of order $k$ with an induced $P_3$.
    
    Suppose $G$ has order $k+1$.
    As $G$ is $P_4$-free, there exist two twin vertices $u$ and $v$.
    If $u$ and $v$ are false twins, then $X=\{u\}$ and $Y=\{v\}$ satisfies the conclusion.
    So suppose $u$ and $v$ are true twins.
    By induction, there exist disjoint and anticomplete sets $X$ and $Y$ in $G-v$ such that $N_{G-v}(X)\neq\emptyset$ and $X$ is complete to $N_{G-v}(Y)$.
    Let $W=N_{G-v}(X)=N_{G-v}(Y)$.
    If $u \in X$ (resp. $u \in Y$) then $N(v) \subseteq W \cup X$ (resp. $N(v) \subseteq W \cup Y$). 
    Thus $X\cup\{v\}$ and $Y$ (resp. $X$ and $Y \cup \{v\}$) would have the desired properties in $G$.
    
    So suppose $u \notin X$ and $u \notin Y$.
    If $u \in W$ then $v$ is complete to $X$ and $Y$ and thus $X$ and $Y$ have the desired property in $G$.
    If $u \notin W$ then $v$ is anticomplete to $X$ and $Y$ and thus $X$ and $Y$ have the desired property in $G$. This completes the proof.
\end{proof}

With this lemma in hand, we can now proceed with the proof of our first main theorem.

\begin{proof}[Proof of Theorem~\ref{thm:P4UellP1F+mBm}]

    If $\ell=0$ or $m=0$, then we are only dealing with $P_4$-free graphs. 
    It follows from the Strong Perfect Graph Theorem \cite{Chudnovsky2006} that $P_4$-free graphs are perfect.
    Moreover, there is exactly one perfect $k$-vertex-critical graph, $K_k$, so the result is immediate.
    So, let $k,\ell$ and $m$ be given positive integers and let $G$ be a $k$-vertex-critical $(P_4+\ell P_1$, $\broom{4}{m}$, $\broom{3}{m}^{+})$-free graph. 
    We will show the equivalent statement that $G$ necessarily has finite order. 
    Let $c=\ell+m+1$.
    If $\alpha(G)<c$, then Ramsey's Theorem \cite{Ramsey} implies that there exists an $n$ such that all graphs with at least $n$ vertices contain an independent set of order $c$ or a clique of order $k+1$.
    Therefore there are only finitely many $k$-colorable graphs with $\alpha(G)<c$.
    So we may suppose $\alpha(G)\ge c$.
    
    By Theorem~\ref{thm:finiteP3ellP1freecrit} there are only finitely many $k$-vertex-critical $(P_3+c P_1)$-free graphs.
    Thus, if $G$ does not have finite order, there must exist an independent set $S$ with $|S|=c$ such that $G-N[S]$ contains an induced $P_3$.
    As $G$ is ($P_4+\ell P_1$)-free, then $G-N[S]$ is $P_4$-free.
    Let $C$ be a component of $G-N[S]$ that contains an induced $P_3$.
    
    By Lemma~\ref{lem:comparablesetsP3nonneighborhood}, there must be anticomplete sets $X,Y\subseteq G-N[S]$ such that $Y$ is complete to $N_C(X)\neq \emptyset$ and $X$ is complete to $N_C(Y)$.
    Without loss of generality suppose $\chi(X) \leq \chi(Y)$.
    By Lemma~\ref{lem:XY}, we must have $x\in X$ and $y\in Y$ such that there are vertices $x',z$ such that $x'\in N(x)\setminus N(y)$ and $z\in N_C(x)\cap N_C(y)$. 
    By our assumption, we know that $x'\in N(S)$.
    Without loss of generality, let $s_1$ be a neighbour of $x'$ in $S$.
    See Figure \ref{fig:indGsplit} for an illustration of $G$. Note the dashed lines are potential adjacencies of $x'$.

    \begin{center}  
    \begin{figure}[h] \label{fig:indGsplit}
    \begin{center}
    
    \def\c{0.75}
        \scalebox{\c}{
        \begin{tikzpicture}
            \draw (1.5,.25) ellipse (2.5cm and 0.75cm);
            \node[label=right:\Large $S$]  at (4,0.25) {};

           \draw (1.5,-1.5) ellipse (2.5cm and 0.75cm);
            \node[label=right:\Large $N(S)$]  at (4,-1.5) {};

            \draw (1.5,-3.75) ellipse (3cm and 1cm);
           \node[label=right:{\Large $G-N[S]$}]  at (4.5,-3.75) {};
           
        \begin{scope}[every node/.style={circle,fill,draw}]
            \node[label=above:$s_1$,draw] (s1) at (0,0) {};
            \node[label=above:$s_2$,draw] (s2) at (1,0) {};
            \node[label=above:$s_c$,draw] (sc) at (3,0) {};
            \node[label=left:$x'$,draw] (x1) at (0.25,-1.5) {};
            \node[label=below:$x$,draw] (x) at (0.5,-3.25) {};  
            \node[label=below:$y$,draw] (y) at (2.5,-3.25) {};    
            \node[label=below:$z$,draw] (z) at (1.5, -4) {}; 
        \end{scope}
        
        \begin{scope}
        
            \path [-] (s1) edge node {} (x1);
            \path [-] (x1) edge node {} (x);
            \path [-] (x) edge node {} (z);     
            \path [-] (z) edge node {} (y);
            
        \end{scope}
        
        \path (s2) -- node[auto=false]{\ldots} (sc);
        \path [style=dashed](x1) edge node {} (s2);
        \path [style=dashed](x1) edge node {} (sc);
        \path [style=dashed](x1) edge node {} (z);
        
        \end{tikzpicture}}
    \caption{Illustration of $G$ in proof of Theorem \ref{thm:P4UellP1F+mBm}}
    \end{center}
    \end{figure}
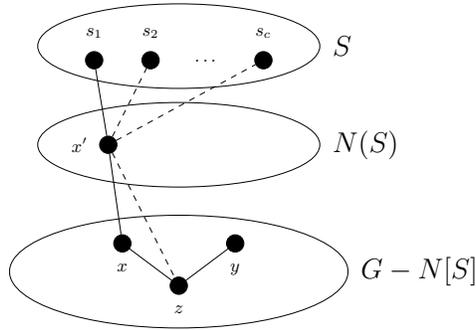
    \end{center}

    Let $S'$ be the set $S\setminus N(x')$. 
    If $|S'|\ge \ell$, then $\{z,x,x',s_1\}\cup S'$ induces a graph containing an induced $P_4+\ell P_1$ if $z\nsim x'$ 
    and
    $\{y,z,x',s_1\}\cup S'$ induces a graph containing an induced $P_4+\ell P_1$ if $z\sim x'$.
    Therefore, $|S'|\le \ell-1$, and $|N(x')\cap S|\ge c-\ell+1=m$.
    Now, the union of $\{y,z,x,x'\}$ and any subset of $S$ containing $m$ neighbours of $x'$ induces $\broom{4}{m}$ if $z\nsim x'$ and $\broom{3}{m}^{+}$ if $z\sim x'$.

    Since we reach a contradiction in both cases, we obtain the desired result.
\end{proof}

\section{$(P_4+\ell P_1$, $2P_2)$-free}\label{sec:2P2}

In this section, part of our proof (Claim~\ref{cla:Uorderatleastk}) is an adaptation of part of the  proof in~\cite{BeatonCameron2025cogemfreeord4finite} that $\numcrit{k}{P_4+P_1,P_5, P_3+cP_2}<\infty$.
The majority of our proof uses new techniques that allow us to get our result for $(P_4+\ell P_1)$-free graphs for all $\ell >1$ (as opposed to $\ell=1$ in~\cite{BeatonCameron2025cogemfreeord4finite}). 

\begin{proof}[Proof of Theorem~\ref{thm:P4UellP12P2}]
If $\ell=0$, then the graph is $P_4$-free and the result is immediate (see comment at the beginning of the Proof of Theorem~\ref{thm:P4UellP1F+mBm}).
Therefore, let $k\ge 1$ and $\ell\ge 1$ be given positive integers and let $G$ be a $k$-vertex-critical $(P_4+\ell P_1$, $2P_2)$-free graph. 
We will prove that $G$ is $(P_3+cP_1)$-free for 

$$c=\binom{k\ell}{\lfloor\frac{k\ell}{2}\rfloor}.$$ 

Suppose, by way of contradiction, that $G$ contains an induced $P_3+cP_1$ and let $P=\{p_1,p_2,p_3\}$ and $S=\{s_1,s_2,\ldots,s_{c}\}$ such that $P\cup S$ induces a $P_3+cP_1$ where the $P_3$ is in order of the indices.  
Let $M$ be the set of all vertices in $V(G)$ that are mixed on $S$. 
Partition $M$ into sets such that all vertices with the exact same neighbours and non-neighbours in $S$ belong to the same set of the partition. 
Let $U$ be a subset of $M$ defined by taking exactly one vertex from each of the sets in the partition. The following claim is a consequence of Sperner's Theorem~\cite{Sperner} and a series of claims in \cite{BeatonCameron2025cogemfreeord4finite} which only depend on $G$ being $k$-vertex-critical and containing an induced $P_3+cP_1$.
To keep this paper self-contained, we will include its proof here.



\begin{claim}[\cite{BeatonCameron2025cogemfreeord4finite}, Claim 3.4]\label{cla:Uorderatleastk}
$|U|\ge k\ell$.
\end{claim}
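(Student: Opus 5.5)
The plan is to show that the map $s\mapsto N(s)\cap U$ is injective on $S$ and that its image is an antichain in the power set of $U$. Sperner's Theorem~\cite{Sperner} then bounds $c$ from above by the central binomial coefficient of $|U|$, and comparing with $c=\binom{k\ell}{\lfloor k\ell/2\rfloor}$ forces $|U|\ge k\ell$.

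\textbf{Step 1 (transfer from $U$ to all of $V(G)\setminus S$).} Fix $s,s'\in S$. Every vertex of $V(G)\setminus (S\cup M)$ is, by definition of $M$, either complete or anticomplete to $S$, so it sees both or neither of $s,s'$. Every vertex $v\in M$ lies in some partition class, and that class has a representative $u\in U$ with exactly the same adjacencies to $S$ as $v$. Hence $v\sim s$ iff $u\sim s$. Because $S$ is independent, $N(s)\subseteq V(G)\setminus S$. Combining these facts, if $N(s)\cap U\subseteq N(s')\cap U$, then $N(s)\subseteq N(s')$.

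\textbf{Step 2 (comparable neighbourhoods are impossible).} Suppose $s\neq s'$ and $N(s)\cap U\subseteq N(s')\cap U$. By Step~1, $N(s)\subseteq N(s')$. Take $X=\{s\}$ and $Y=\{s'\}$ in Lemma~\ref{lem:XY}:
\begin{itemize}
\item they are anticomplete, since $S$ is independent;
\item $\chi(G[X])=1=\chi(G[Y])$;
\item $Y$ is complete to $N(X)=N(s)$, since $N(s)\subseteq N(s')$.
\end{itemize}
This contradicts $k$-vertex-criticality. Therefore the $c$ sets $N(s)\cap U$, $s\in S$, are pairwise distinct and pairwise incomparable. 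In other words, they form an antichain of size $c$ in $2^{U}$.

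\textbf{Step 3 (Sperner and monotonicity).} Sperner's Theorem gives $c\le\binom{|U|}{\lfloor |U|/2\rfloor}$. The sequence $\binom{n}{\lfloor n/2\rfloor}$ is strictly increasing for $n\ge 1$, since it equals $1,2,3,6,10,\dots$ for $n=1,2,3,4,5,\dots$.
\begin{itemize}
\item If $k\ell\ge 2$ and $|U|\le k\ell-1$, then $\binom{|U|}{\lfloor |U|/2\rfloor}<\binom{k\ell}{\lfloor k\ell/2\rfloor}=c$. This is a contradiction, so $|U|\ge k\ell$.
\item If $k\ell=1$, then $k=1$. The only $1$-vertex-critical graph is $K_1$, which contains no induced $P_3$, so this case does not occur.
\end{itemize}

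The only step that needs real care is Step~1. One must check that vertices outside $M$ cannot distinguish members of $S$, and that replacing each vertex of $M$ by its representative in $U$ preserves adjacency to $S$. Once this is in place, the remaining steps are a direct application of Lemma~\ref{lem:XY} and Sperner's Theorem.
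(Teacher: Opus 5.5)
Your proof is correct and is essentially the paper's argument. Both show that $\{N(s)\cap U : s\in S\}$ is an antichain by applying Lemma~\ref{lem:XY} to $X=\{s\}$, $Y=\{s'\}$ and passing a vertex that distinguishes $s$ from $s'$ (necessarily mixed on $S$) to its representative in $U$, and then conclude with Sperner's Theorem. Your version runs the transfer step contrapositively and also spells out details the paper leaves implicit: that the $c$ sets are distinct, that $\binom{n}{\lfloor n/2\rfloor}$ is monotone, and that the degenerate case $k\ell=1$ cannot occur.
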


 \begin{proof}[Proof of Claim~\ref{cla:Uorderatleastk}]We will show that $A=\{N(s)\cap U: s\in S\}$ is an antichain in the partial order $(\mathcal{P}(U),\subseteq)$. Then

 $$|A|=c=\binom{k\ell}{\lfloor\frac{k\ell}{2}\rfloor},$$ 
 
 \noindent and follows from Sperner's Theorem~\cite{Sperner} that $|U|\ge k\ell$.

 Suppose $A$ is not an antichain. 
 Then there exists $s_1, s_2 \in S$ such that $N(s_1)\cap U \subseteq N(s_2)\cap U$.
 As $S$ is an independent set, then, by Lemma \ref{lem:XY}, there must be a vertex $u_1 \sim s_1$ and $u_1 \not\sim s_2$.
 So $u_1$ is mixed on $S$ and hence $u_1 \in M$.
 By the construction of $U$, there is a vertex in $u_1' \in U$ with the same neighbors and non-neighbors as $u_1$ in $S$.
 Thus $u_1' \sim s_1$ and $u_1' \not\sim s_2$.
 Hence $u_1' \in N(s_1)\cap U$ but $u_1' \notin N(s_1)\cap U$ so $N(s_1)\cap U \not\subseteq N(s_2)\cap U$ which is a contradiction.
  
\end{proof}


\begin{claim}\label{cla:indnumofU}
$\alpha(G[U]) \le \ell$. 
\end{claim}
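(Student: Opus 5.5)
The plan is a proof by contradiction. Suppose $U$ contains an independent set $\{u_1,\dots,u_{\ell+1}\}$. Write $N_i=N(u_i)\cap S$. Since every vertex of $U\subseteq M$ is mixed on $S$, each $N_i$ is a nonempty proper subset of $S$. Since $U$ contains at most one vertex from each class of the partition of $M$, the sets $N_1,\dots,N_{\ell+1}$ are pairwise distinct. I will use $2P_2$-freeness to show these sets form a chain, and then find an induced $P_4+\ell P_1$ directly.

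\emph{Step 1 (the traces are nested).} Take $i\neq j$ and suppose $N_i$ and $N_j$ are incomparable. Then there are $s\in N_i\setminus N_j$ and $t\in N_j\setminus N_i$. The pairs $u_is$ and $u_jt$ are edges. We have $u_i\nsim u_j$ because $\{u_1,\dots,u_{\ell+1}\}$ is independent, and $s\nsim t$ because $S$ is independent. Also $u_i\nsim t$ and $u_j\nsim s$ by the choice of $s$ and $t$. Hence $\{u_i,s,u_j,t\}$ induces a $2P_2$, a contradiction. So the $N_i$ are pairwise comparable and pairwise distinct. After relabelling, $N_1\subsetneq N_2\subsetneq\cdots\subsetneq N_{\ell+1}\subsetneq S$, with $N_1\neq\emptyset$.

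\emph{Step 2 (counting).} The chain $N_2\subsetneq N_3\subsetneq\cdots\subsetneq N_{\ell+1}\subsetneq S$ has $\ell$ strict inclusions. Each strict inclusion adds at least one element, so $|S\setminus N_2|\ge \ell$. Choose $\ell$ vertices $t_1,\dots,t_\ell\in S\setminus N_2$; they are non-neighbours of both $u_1$ and $u_2$, since $N_1\subseteq N_2$.

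\emph{Step 3 (the $P_4$).} Pick $s_0\in N_1$, which exists as $N_1\neq\emptyset$, and $s'\in N_2\setminus N_1$. I claim $s'\,u_2\,s_0\,u_1$ is an induced $P_4$:
\begin{itemize}
\item $s'\sim u_2$ since $s'\in N_2$; $u_2\sim s_0$ since $s_0\in N_1\subseteq N_2$; and $s_0\sim u_1$ since $s_0\in N_1$.
\item $s'\nsim s_0$ since $S$ is independent, $u_1\nsim u_2$ by the choice of independent set, and $s'\nsim u_1$ since $s'\notin N_1$.
\end{itemize}
Each $t_r$ is nonadjacent to $s'$ and $s_0$ because $S$ is independent, and to $u_1$ and $u_2$ by Step 2. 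The $t_r$ are also pairwise nonadjacent. Hence $\{s',u_2,s_0,u_1,t_1,\dots,t_\ell\}$ induces $P_4+\ell P_1$, contradicting that $G$ is $(P_4+\ell P_1)$-free. Therefore $\alpha(G[U])\le\ell$.

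The only step needing care is choosing which two $u$'s to use for the $P_4$ so that enough independent vertices remain. Using the two smallest traces $N_1,N_2$ is the right choice: the remaining $\ell$ strict inclusions up to $S$ leave at least $\ell$ vertices of $S$ outside $N_2$, and all of them avoid the whole $P_4$. Note that this argument does not use the induced $P_3$ of $P\cup S$ at all.
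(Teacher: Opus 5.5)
Your proposal is correct and follows essentially the same route as the paper. Both arguments use $2P_2$-freeness to force the traces $N(u_i)\cap S$ into a strict chain, use mixedness to get $\ell$ vertices of $S$ outside the second-smallest trace, and then take the two smallest traces to form the induced $P_4$ $s'u_2s_0u_1$ plus $\ell$ isolated vertices. Your counting of strict inclusions is in fact stated more cleanly than the paper's, which contains an index slip ($u_{\ell+2}$).
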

\begin{proof}[Proof of Claim~\ref{cla:indnumofU}]
    Suppose by way of contradiction that $U$ contains an independent set of order at least $\ell+1$ and let $U'=\{u_1,u_2,\dots u_{\ell+1}\}$ be an independent subset of $U$.
    Moreover, without loss of generality, let 

    $$|N(u_1)\cap S| \leq |N(u_2)\cap S| \leq \cdots \leq |N(u_{\ell+1})\cap S|.$$

    \noindent We claim that $N(u_{i})\cap S \subset N(u_{j})\cap S$ for each $1 \leq i< j\leq \ell+1 $.
    To show a contradiction, suppose not.
    By our definition of $U$ and $U'$, clearly $N(u_{i})\cap S \neq N(u_{j})\cap S$ and $N(u_{j})\cap S \not\subset N(u_{i})\cap S$.
    So there exists $s_i, s_j \in S$ such that $s_i \in (N(u_{i})\cap S) \setminus (N(u_{j})\cap S)$ and  $s_j \in (N(u_{j})\cap S) \setminus (N(u_{i})\cap S)$.
    Furthermore $U'$ and $S$ are each independent sets so $u_i \not\sim u_j$ and $s_i \not\sim s_j$.
    However, $\{u_i, s_i, u_j, s_j\}$ induce a $2P_2$ which is a contradiction.
    Therefore $N(u_{i})\cap S \subset N(u_{j})\cap S$ for each $1 \leq i< j\leq \ell+1 $ and hence

    $$|N(u_1)\cap S| < |N(u_2)\cap S| < \cdots < |N(u_{\ell+1})\cap S|.$$

    By the definition of $U$, each $u_i$ is mixed on $S$.
    Therefore $|N(u_{\ell+2})\cap S| \leq |S|-1$ and hence $1 \leq |N(u_{1})\cap S| < |N(u_{2})\cap S| \leq |S|-\ell$.
    Let $T \subset S$ be a set of $\ell$ vertices not adjacent to $u_2$.
    Note $T$ is an independent set.
    Moreover, no vertex in $T$ is adjacent to $u_1$ or any other vertex in $S$.
    Now let $s_1 \in N(u_{1})\cap S$ and $s_2 \in (N(u_{2})\cap S) \setminus (N(u_{1})\cap S)$.
    As $N(u_{1})\cap S \subset N(u_{2})\cap S$ then $s_1 \in N(u_{2})\cap S$ and hence $\{u_1, s_1, u_2, s_2\}$ induces a $P_4$.
    Moreover $\{u_1, s_1, u_2, s_2\} \cup T$ induces a  $P_4+ \ell P_1$ which is a contradiction.
\end{proof}

\begin{claim}\label{cla:chromaticnumofUatleastk}
$\chi(G[U])\ge k$. 
\end{claim}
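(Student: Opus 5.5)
The plan is to combine the two preceding claims through the elementary bound $\chi(H)\ge |V(H)|/\alpha(H)$, which holds for every graph $H$. No further structural argument should be needed: Claims~\ref{cla:Uorderatleastk} and~\ref{cla:indnumofU} already give the two required inequalities.

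First I would recall why the bound holds. In any proper colouring of $H$, each colour class is an independent set, so it has at most $\alpha(H)$ vertices. Hence a proper colouring with $\chi(H)$ colours covers at most $\chi(H)\,\alpha(H)$ vertices, which gives
$$|V(H)|\le \chi(H)\,\alpha(H).$$

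Next I would apply this with $H=G[U]$. Claim~\ref{cla:Uorderatleastk} gives $|U|\ge k\ell$. Claim~\ref{cla:indnumofU} gives $\alpha(G[U])\le \ell$, where $\ell\ge 1$ in this proof. Combining these,
$$\chi(G[U])\ \ge\ \frac{|U|}{\alpha(G[U])}\ \ge\ \frac{k\ell}{\ell}\ =\ k.$$

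I do not expect a real obstacle here. The only points to check are that $\ell\ge 1$, so the division is legitimate, and that $U$ is nonempty, which follows from $|U|\ge k\ell\ge 1$. With $\chi(G[U])\ge k$ established, the surrounding proof can presumably conclude using $U\subseteq M\subseteq V(G)$ and the vertex-criticality of $G$, for instance because some vertex of $S$ lies outside $U$, so that $G-s\supseteq G[U]$ must still need $k$ colours.
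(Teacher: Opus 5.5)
Your proposal is correct and matches the paper's proof. Both combine $|U|\ge k\ell$ from Claim~\ref{cla:Uorderatleastk} and $\alpha(G[U])\le \ell$ from Claim~\ref{cla:indnumofU} with the elementary bound $\chi(G[U])\ge |U|/\alpha(G[U])$; you cite the independence-number claim correctly, whereas the paper's text mistakenly cites the claim being proved.
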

\begin{proof}[Proof of Claim~\ref{cla:chromaticnumofUatleastk}]
    We have $\frac{|U|}{\alpha(G[U])}\ge \frac{k\ell}{\ell}=k$ by Claims~\ref{cla:Uorderatleastk} and \ref{cla:chromaticnumofUatleastk}. 
    Further, $\frac{|U|}{\alpha(G[U])}\le \chi(G[U])$ by an elementary bound on the chromatic number. Therefore, we have $\chi(G[U])\ge k$.
\end{proof}

We now complete the proof of the theorem. Claim~\ref{cla:chromaticnumofUatleastk} contradicts $G$ being $k$-vertex-critical since we will have $\chi(G-v)\ge k$ for all $v\in V(G)\setminus U$. So, it must be that $G$ is $(P_3+cP_1)$-free and therefore that there are only finitely many $(P_4+\ell P_1, 2P_2)$-free graphs for all $k\ge 1$ by Theorem~\ref{thm:finiteP3ellP1freecrit}.
\end{proof}

\section{$(P_4+\ell P_1, K_k)$-free}\label{sec:K_k}

In this section, we explore the possibility of removing the restriction of $\broom{4}{m}$ from Theorem~\ref{thm:P4UellP1F+mBm}. In order for $\numcrit{k}{P_4+\ell P_1,\broom{3}{m}^{+}}<\infty$ there must necessarily be only finitely many $k$-vertex-critical $(P_4+\ell P_1, K_3)$-free graphs. We will resolve this for $k\geq \ell+3$ through a colourability result on $(P_4+\ell P_1, K_k)$-free graphs.

Note that it is known that every $(P_t,K_3)$-free graphs is $(t-2)$-colourable~\cite{RanderathSchiermeyer2004}, so it follows that every $(P_4+\ell P_1,K_3)$-free graph is $(2\ell+2)$-colourable. We improve this result with a corollary of the following general result.

\begin{theorem}
    Let $H$ be a finite graph, $\ell \geq 0$ and $k \geq 1$. If every $(H,K_3)$-free graph is $k$-colourable, then every $(H+\ell P_1,K_3)$-free graph is $(k+\ell)$-colourable
\end{theorem}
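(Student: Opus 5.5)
Induction on $\ell$. The case $\ell=0$ is the hypothesis. For $\ell\ge 1$, assume every $(H+(\ell-1)P_1,K_3)$-free graph is $(k+\ell-1)$-colourable, and let $G$ be $(H+\ell P_1,K_3)$-free. It suffices to find an independent set $I$ with $G-I$ being $(H+(\ell-1)P_1)$-free: then $G-I$ (still $K_3$-free) is $(k+\ell-1)$-colourable, and giving $I$ one new colour yields a $(k+\ell)$-colouring of $G$. Any induced subgraph of a $K_3$-free graph is $K_3$-free, so the induction hypothesis applies to $G-I$.

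For $I$, first try $I=N(v)$ for a suitable vertex $v$. Since $G$ is $K_3$-free, $N(v)$ is independent. If $G-N(v)$ contains an induced $H+(\ell-1)P_1$ that avoids $v$, then adding $v$ gives an induced $H+\ell P_1$ in $G$, because $v$ has no neighbours outside $N(v)$. So the only copies of $H+(\ell-1)P_1$ that survive in $G-N(v)$ are those using $v$. To remove them, delete $N[v]$ instead. This is not independent, but $v$ is nonadjacent to $N(v)$'s complement, so $v$ can be put in the colour class of any vertex not in $N(v)$. Concretely: $G-N[v]$ is $(H+(\ell-1)P_1)$-free, since any induced copy there, together with $v$, is an induced $H+\ell P_1$ in $G$. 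Hence $G-N[v]$ is $(k+\ell-1)$-colourable by induction. Colour $N(v)$ with a single new colour, which is valid since $N(v)$ is independent. Then give $v$ any old colour; $v$ is adjacent only to $N(v)$, which carries the new colour, so there is no conflict. This gives $\chi(G)\le k+\ell$.

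So the plan reduces to picking any vertex $v$; the case $G$ empty is trivial. The main point requiring care is that $H+(\ell-1)P_1$ inside $G-N[v]$ extends to an induced $H+\ell P_1$ by adding $v$. This holds precisely because $v$ is anticomplete to $V(G)\setminus N[v]$, and it does not depend on $H$ being connected or having any other property. I expect no real obstacle beyond checking the base case and noting that $K_3$-freeness is used exactly once, to make $N(v)$ independent.
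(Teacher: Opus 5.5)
Your proof is correct and is essentially the paper's argument in inductive form. Unrolling your induction picks pairwise non-adjacent vertices $v_1,\dots,v_\ell$ greedily, colours each $N(v_i)\setminus\bigcup_{j<i}N[v_j]$ (independent by $K_3$-freeness) with its own new colour, and colours the $H$-free remainder $G-N[\{v_1,\dots,v_\ell\}]$ with $k$ colours, which the $v_i$ then reuse; this is exactly the paper's construction with its independent set $S$ and partition $S_1,\dots,S_{|S|}$ of $N(S)$. The induction also absorbs the paper's separate case where no independent set of size $\ell$ exists, since that just becomes an empty remainder.
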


\begin{proof}
    Suppose for some finite graph $H$ that every $(H,K_3)$-free graph is $k$-colourable. 
    Let $G$ be a $(H+\ell P_1, K_3)$-free graph.
    Note if $\ell=0$ then $G$ is $(H, K_3)$-free and hence $k$-colourable.
    So suppose $\ell \geq 1$
    Let $S$ be an independent set of size at most $\ell \geq 1$ such that either $|S|=\ell$ or, if no such indepedent set exists, $|S|$ is maximal. 
    It suffices to show that $G$ is $(|S|+k)$-colourable.
    Label the vertices such that $S=\{v_1,v_2, \ldots, v_{|S|}\}$.
    For each $i \in \{1, 2, \ldots |S|\}$ let $S_i$ be the vertices adjacent to $v_i$ in $S$ but not adjacent to $v_j$ for any $j<i$.
    Note $S$ together with each $S_i$ partitions $N[S]$.
    Moreover as $G$ is $K_3$-free then each $S_i$ is an independent set.
    Thus we can assign $S$ and each $S_i$ their own colour to colour $N[S]$ with $|S|+1$ colours.
    Now consider $G-N[S]$.
    Note that if $|S|$ is maximal then $G-N[S]$ is empty so $G$ is $(|S|+1)$-colourable and hence $(\ell+k)$-colourable as $|S|<\ell$ and $1\le k$.
    Thus we may assume $|S|=\ell$ in which case $G-N[S]$ is $H$-free.
    Therefore $G-N[S]$ is $(H,K_3)$-free and hence $k$-colourable
    We can reuse the the colour assigned to $S$ and colour $G$ with $|S|+k$ colours.
\end{proof}

Thus, immediate corollaries of this are that every $(H+\ell P_1,K_3)$-free graph is $(\ell+4)$-colourable when $H$ is $2P_3$ or $P_4+P_2$, as the $4$-colourability of $(H,K_3)$-free graphs for each $H$ was shown in \cite{PYATKIN2013715} and \cite{ChenWuZhang025}, respectively. Further, from the result that every $(P_t,K_3)$-free graph is $(t-2)$-colourable~\cite{RanderathSchiermeyer2004} we have the following corollary.

\begin{corollary} \label{cor:P_4+lP_1_K_3free}
    For all $\ell\ge 0$ and $t\ge 4$, every $(P_t+\ell P_1, K_3)$-free graph is $\left(t+\ell-2\right)$-colourable.
\end{corollary}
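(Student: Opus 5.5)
The plan is to apply the preceding theorem with $H=P_t$, using the Randerath--Schiermeyer result~\cite{RanderathSchiermeyer2004} as the required input. That result says every $(P_t,K_3)$-free graph is $(t-2)$-colourable. So the hypothesis of the theorem, that every $(H,K_3)$-free graph is $k$-colourable, holds with $H=P_t$ and $k=t-2$. We need $k\ge 1$, which is guaranteed by $t\ge 4$, since then $k=t-2\ge 2$. The theorem then gives that every $(P_t+\ell P_1,K_3)$-free graph is $(t-2+\ell)$-colourable for every $\ell\ge 0$, which is exactly the claimed bound $t+\ell-2$.

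The only point to check is that the hypotheses line up exactly. The theorem is stated for an arbitrary finite graph $H$, every $\ell\ge 0$ and every $k\ge 1$, so no extra conditions arise. There is no real obstacle: the whole content is the cited $(t-2)$-colourability, and the corollary is a direct substitution into the theorem. The restriction $t\ge 4$ is only needed to stay in the range where the cited bound is meaningful and $k\ge1$.

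As a remark, the case $t=4$ gives the improvement advertised in the abstract: every $(P_4+\ell P_1,K_3)$-free graph is $(\ell+2)$-colourable.
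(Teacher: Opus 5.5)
Your proposal is correct and is exactly the paper's argument: the corollary follows by substituting $H=P_t$ and $k=t-2$, justified by the Randerath--Schiermeyer bound, into the preceding theorem on $(H+\ell P_1,K_3)$-free graphs. One small correction to your remark: $t\ge 4$ is needed because the cited $(t-2)$ bound is false for $t=3$ (e.g.\ $K_2$ is $(P_3,K_3)$-free but not $1$-colourable), not because of the requirement $k\ge 1$, which would still hold there.
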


It was shown in \cite{GRAVIER2003} that $\chi(G) \leq (t-2)^{\omega(G)-1}$ for $P_t$-free graphs. This $\chi$-bound implies that every $(P_4+\ell P_1, K_k)$-free graph is $(2\ell +2)^{k-2}-$colourable. We improve this in the next result by generalizing Corollary \ref{cor:P_4+lP_1_K_3free} for $t=4$.

\begin{theorem}
    For all $\ell\ge 0$ and $k\ge 3$, every $(P_4+\ell P_1, K_k)$-free graph $G$ has 

    $$\chi(G) \leq \ell^{k-2}+2\ell^{k-3}+3\ell^{k-4}\cdots+(k-2)\ell+(k-1). $$

\end{theorem}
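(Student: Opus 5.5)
We argue by induction on $k$; write $f(k)=\ell^{k-2}+2\ell^{k-3}+\cdots+(k-2)\ell+(k-1)$. The base case $k=3$ is Corollary~\ref{cor:P_4+lP_1_K_3free} with $t=4$, which gives $f(3)=\ell+2$. The recursion we aim for is
$$f(k+1)=\ell f(k)+f(k)+\ldots$$
and it should be checked against the closed form. We have $\ell f(k)=\ell^{k-1}+2\ell^{k-2}+\cdots+(k-1)\ell$, so $f(k+1)=\ell f(k)+k$. The inductive step therefore has to show
$$\chi(G)\le \ell\, f(k)+k$$
for every $(P_4+\ell P_1,K_{k+1})$-free graph $G$, assuming $f(k)$ colours suffice for $(P_4+\ell P_1,K_k)$-free graphs.

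The first step mirrors the proof of the preceding theorem. If $\ell=0$ the graph is $P_4$-free, hence perfect, so $\chi(G)\le k\le f(k+1)$. For $\ell\ge1$, choose an independent set $S=\{v_1,\dots,v_{|S|}\}$ with $|S|=\ell$ if possible, and otherwise a maximal one, so $|S|<\ell$.

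Partition $N(S)$ into $S_1,\dots,S_{|S|}$, where $S_i$ consists of the neighbours of $v_i$ that are not adjacent to any $v_j$ with $j<i$. Each $S_i\subseteq N(v_i)$, and since $G$ is $K_{k+1}$-free, $G[S_i]$ is $K_k$-free. Each $G[S_i]$ is also $(P_4+\ell P_1)$-free, so by induction $\chi(G[S_i])\le f(k)$. Together the sets $S_i$ use at most $\ell f(k)$ colours.

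Next consider $R=G-N[S]$. If $S$ is maximal, $R$ is empty. Otherwise $|S|=\ell$, and $R$ is $P_4$-free, since an induced $P_4$ in $R$ together with $S$ would induce $P_4+\ell P_1$. Hence $R$ is perfect and $K_{k+1}$-free, so $\chi(R)\le k$. The set $S$ is independent and anticomplete to $R$, so $S$ can reuse one of the colours of $R$. This gives a total of at most $\ell f(k)+k=f(k+1)$ colours, as required. In the degenerate case $|S|<\ell$ we use at most $(\ell-1)f(k)+1\le f(k+1)$ colours.

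The main obstacle is only bookkeeping: confirming that the recursion $f(k+1)=\ell f(k)+k$ with $f(3)=\ell+2$ produces exactly the stated closed form. Taking $f(2)=1$ also gives $f(3)=\ell+2$, which is consistent. The other point to check is the step where perfection of $P_4$-free graphs is used to bound $\chi(R)$ by $\omega(R)\le k$. This substitutes for the triangle-free argument of the previous theorem and is what yields the additive $k$ term.
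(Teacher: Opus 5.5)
Your proof is correct and follows the paper's argument essentially step for step. The paper also inducts on $k$ from the corollary with $t=4$ and takes an independent set $S$ of size $\ell$ (or a maximal one). It partitions $N(S)$ into sets $S_i\subseteq N(v_i)$, which are $K_k$-free and hence $f(k)$-colourable by induction, and uses perfection of the $P_4$-free remainder $G-N[S]$, whose at most $k$ colours can be shared with $S$, giving $\ell f(k)+k=f(k+1)$.

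The stray line $f(k+1)=\ell f(k)+f(k)+\ldots$ should simply be deleted, since your own check $f(k+1)=\ell f(k)+k$ is the correct recursion. Your remark that $f(2)=1$ fits the recursion also shows the induction could start at $k=2$, which would make the appeal to the corollary unnecessary.
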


\begin{proof}
    We proceed by induction on $k$.
    The case of $k=3$ follows from Corollary \ref{cor:P_4+lP_1_K_3free}.
    So suppose for all $\ell\ge 0$ and some $k\ge 3$ that every graph is $f(k)$-colourable where $f(k)=\ell^{k-2}+2\ell^{k-3}+\cdots+(k-1)$. 
    Let $G$ be a $(P_4+\ell P_1, K_{k+1})$-free graph.
    Note if $\ell=0$ then $G$ is $P_4$-free and hence perfect.
    Therefore  $\chi(G)=\omega(G) \leq k$ as $G$ is $K_{k+1}$-free.
    Let $S$ be an independent set of size at most $\ell \geq 1$ such that either $|S|=\ell$ or $S$ is maximal.
    It suffices to show that $G$ is $(|S|\cdot f(k)+k)$-colourable.
    Label the vertices such that $S=\{v_1,v_2, \ldots, v_{|S|}\}$.
    Moreover, for each $i \in \{1, 2, \ldots |S|\}$, let $S_i$ be the vertices adjacent to $v_i$ in $S$ but not adjacent to $v_j$ for any $j<i$.
    Note $S$ together with each $S_i$ partition $N[S]$.
    Since $G$ is $K_{k+1}$-free, each $S_i$ must be $K_{k}$-free and hence $(P_4+\ell P_1, K_{k})$-free graph.
    Thus, by induction, each $S_i$ is $f(k)$-colourable.
    So we can colour $N[S]$ with $|S|\cdot f(k)+1$ colours by also assigning $S$ its own colour.
    Now consider $G-N[S]$.
    Note that if $S$ is maximal then $G-N[S]$ is empty so $G$ is $(|S|\cdot f(k)+1)$-colourable.
    Thus we may assume $|S|=\ell$ in which case $G-N[S]$ is $P_4$-free.
    Therefore $G-N[S]$ is perfect and $\chi(G-N[S])=\omega(G-N[S]) \leq k$ as $G$ is $K_{k+1}$-free.
    We can reuse the the colour assigned to $S$ and colour $G$ with $|S|\cdot f(k)+k$ colours.
\end{proof}


\section{Conclusion}\label{sec:conclusion}

In this work we gave the first results on the number of vertex-critical graphs in subfamilies of $(P_4+\ell P_1)$-free graphs with no restrictions on $\ell$.

A natural open question is if it is possible to remove the restriction of either $\broom{4}{m}$ or $\broom{3}{m}^{+}$ from Theorem~\ref{thm:P4UellP1F+mBm}. 
Removing the former would result in the highly desirable corollary that $\numcrit{k}{P_4+\ell P_1,P_5}<\infty$ for all $k\ge 1$ and $\ell\ge 0$.
Removing the latter would have, as its most specific new corollary, that $\numcrit{k}{P_4+\ell P_1,K_3}<\infty$ which was discussed in Section \ref{sec:K_k}.  
We think this is a particularly interesting open problem, especially given the very recent results that all $(P_5+P_1,K_3)$-free graphs are $3$-colourable and $\numcrit{4}{P_{11},K_3}=\infty$~\cite{zhouetal2025P11C3freecritical}.

\begin{problem}
    Determine if $\numcrit{k}{P_4+\ell P_1,K_3}$ is finite for all $\ell\ge 2$.
\end{problem}

Beyond the number of vertex-critical graphs in subfamilies, the colourability in general is often of interest. Our results in Section \ref{sec:K_k} imply that every $(P_4+\ell P_1, K_k)$-free graph is $\mathcal{O}(\ell^{k-2})$-colourable. However, a conjecture in \cite{trotignon2018chi} posits that $(P_t, K_k)$-free graphs are polynomial-bounded by $k$. Which leads to the following analogous problem.

\begin{problem}
    For each $k\geq 3$, determine the precise value $M_k$ such that $\chi(G)\le M_k$ for all $(P_4+\ell P_1,K_k)$-free graphs $G$.
\end{problem}

Another direction for future research would be to use our results to prove the finiteness of $k$-vertex-critical $(P_5,H)$-free graphs when $H$ is any of chair, bull, or cricket. 
The idea would be that from our results, it is known that there are only finitely many that are $(P_4+\ell P_1)$-free, so it remains only to prove that there are only finitely many that contain an induced $P_4+\ell P_1$. 
This could be a promising technique as $\ell$ can be made as large as possible as long as it only depends on $k$, and thereby giving lots of control to the interested researcher.


\begin{credits}
\subsubsection{\ackname} 
The reseach of Ben Cameron was supported by the Natural Sciences and Engineering Research Council of Canada (NSERC), grants RGPIN-2022-03697 and DGECR-2022-00446. The research of Iain Beaton was also supported by NSERC grants RGPIN-2025-06012 and DGECR-2025-00001
\subsubsection{\discintname}
The author(s) have no competing interests to declare that are relevant to the content of this article.
\end{credits}
%
%

\bibliographystyle{splncs04}
\bibliography{refs}





\end{document}